\newtheorem{remark}{Remark}
\newtheorem{assumption}{Assumption}
\newtheorem{theorem}{Theorem}
\newtheorem{proposition}{Proposition}
\newcolumntype{C}{>{\centering\arraybackslash}p{2.5cm}}
\newcommand{\averagel}{\{\!\!\{}
\newcommand{\averager}{\}\!\!\}}
\newcommand{\jumpl}{[\![}
\newcommand{\jumpr}{]\!]}
\newcommand{\tnorm}{|\!\!\:|\!\!\:|}
\newcommand{\partition}{\mathcal{T}_h}
\newcommand{\facesinternal}{\mathcal{F}^\mathrm{I}_h}
\newcommand{\faces}{\mathcal{F}_h}
\newcommand{\facesN}{\mathcal{F}_h^\mathrm{N}}
\newcommand{\facesD}{\mathcal{F}_h^\mathrm{D}}
\newcommand{\facesboundary}{\mathcal{F}^\mathrm{B}_h}
\newcommand{\Wh}{W_{h}^\mathrm{DG}}
\DeclareMathAlphabet{\mathcalligra}{T1}{calligra}{m}{n}
\title{Discontinuous Galerkin approximations of the heterodimer model for protein-protein interaction\footnote{\textbf{Funding}: This research has been partially funded by the European Union (ERC, NEMESIS, project number 101115663). Views and opinions expressed are however those of the author(s) only and do not necessarily reflect those of the European Union or the European Research Council Executive Agency. PFA has been partially funded by PRIN2020 n. 20204LN5N5“Advanced polyhedral discretisations of heterogeneous PDEs for multiphysics problems” research grant, funded by the Italian Ministry of Universities and Research (MUR).  FB is partially funded by “INdAM - GNCS Project”, codice CUP E53C22001930001. The present research is part of the activities of “Dipartimento di Eccelllenza 2023-2027”. MC, FB, and PFA are members of INdAM-GNCS.}}
\author[1]{Paola F. Antonietti\footnote{paola.antonietti@polimi.it}}
\author[1]{Francesca Bonizzoni\footnote{francesca.bonizzoni@polimi.it}}
\author[1]{Mattia Corti\footnote{mattia.corti@polimi.it}}
\author[1]{Agnese Dall'Olio\footnote{agnese.dallolio@mail.polimi.it}}
\affil[1]{MOX-Dipartimento di Matematica, Politecnico di Milano, Piazza Leonardo da Vinci 32, Milan, 20133, Italy}
\begin{document}
\maketitle

\begin{abstract}
Mathematical models of protein-protein dynamics, such as the heterodimer model, play a crucial role in understanding many physical phenomena, e.g., the progression of some neurodegenerative diseases. This model is a system of two semilinear parabolic partial differential equations describing the evolution and mutual interaction of biological species. This article presents and analyzes a high-order discretization method for the numerical approximation of the heterodimer model capable of handling complex geometries. In particular, the proposed numerical scheme couples a Discontinuous Galerkin method on polygonal/polyhedral grids for space discretization, with a $\theta$-method for time integration. This work presents novelties and progress with respect to the mathematical literature, as stability and a-priori error analysis for the heterodimer model are carried out for the first time. Several numerical tests are performed, which demonstrate the theoretical convergence rates, and show good performances of the method in approximating traveling wave solutions as well as its flexibility in handling complex geometries. Finally, the proposed scheme is tested in a practical test case stemming from neuroscience applications, namely the simulation of the spread of $\alpha$-synuclein in a realistic test case of Parkinson’s disease in a two-dimensional sagittal brain section geometry reconstructed from medical images.

\end{abstract} 
\section{Introduction}
\label{sec:introduction}
The heterodimer continuum model \cite{matthaus_diffusion_2006} is a well-recognized mathematical model in the context of the description of protein-protein interactions \cite{weickenmeier_multiphysics_2018, fornari_spatially-extended_2020}. From a mathematical point of view, this model is a system of parabolic semilinear partial differential equations (PDEs) \cite{smoller_shock_1994}. Its formulation allows the description of the underlying microscopic mechanism, typical of protein dynamics, while simultaneously capturing the broader macroscopic phenomena of propagation in spatial domains. 
\par
An important application of this model concerns the study of prionic protein dynamics in neurodegenerative diseases. Indeed, in those pathologies known as proteinopathies (i.e. Alzheimer's and Parkinson's diseases), the heterodimer model embraces a complete consideration of both healthy and misfolded protein configurations, typically related to the pathology development \cite{juckerSelfpropagationPathogenicProtein2013}. Furthermore, the model includes explicit terms representing distinct molecular kinetic processes: production, clearance \cite{wilsonHallmarksNeurodegenerativeDiseases2023}, and conversion, which is typically guided by the presence of other misfolded proteins \cite{hasegawaPrionlikeMechanismsPotential2017}. This level of detail of the mathematical model affords the ability to simulate different scenarios \cite{fornariPrionlikeSpreadingAlzheimer2019}, such as variations in clearance or production mechanisms. The development of mathematical models and computational methods in this field, where the neurodegeneration may span over decades and the clinical data are not easy to collect,
represents an essential tool to support medical science and a key instrument to asses patient-specific evolution scenarios. In biological applications, and in particular, in the field of neurodegenerative diseases, a well-known model is the Fisher-Kolmogorov (FK) equation (see \cite{weickenmeierPhysicsbasedModelExplains2019,fornari_spatially-extended_2020, corti_discontinuous_2023}). The latter can be interpreted as a particular case of the heterodimer model, when the dynamic of one population is constant (i.e. healthy protein concentration \cite{weickenmeierPhysicsbasedModelExplains2019}). 
Instead, the heterodimer model accommodates the dynamics of both biological populations, hence delivering a more informative solution. The price to pay is a substantially increased complexity of the mathematical formulation, which involves not just one equation, but rather a system of two equations, together with a larger computational effort required to approximate the solution of the model.

\par
Concerning the numerical approximation of the heterodimer model, in literature we find many different methods, i.e. finite element methods \cite{fornariPrionlikeSpreadingAlzheimer2019} or reduced order network diffusion models \cite{thompsonProteinproteinInteractionsNeurodegenerative2020a}. For what concerns numerical methods for semilinear parabolic problems, there exist some works concerning Discontinuous Galerkin approximation for the vectorial case \cite{cangiani_discontinuous_2013,cangiani_discontinuous_2016,cangiani_spatial_2010}, as well as for the scalar case \cite{lasis_hp-version_2007,corti_discontinuous_2023,corti_positivity_preserving,bonizzoni_structure-preserving_2020}.  In this work, we propose and analyze a Discontinuous Galerkin formulation on polygonal/polyhedral grids (PolyDG) for the space discretization of the heterodimer system, coupled with a Crank-Nicolson time discretization scheme, with a semi-implicit treatment of the nonlinear terms. 
\par
The proposed approach enjoys distinguishing features and relevant advantages, when compared to the available approaches in the literature. First, it supports high-order approximations that are able to correctly reproduce traveling-wave solutions. Second, it is capable to handle complex geometries. This is due to the possibility of using arbitrarily shaped polygonal/polyhedral elements, possibly obtained by means of agglomeration techniques \cite{antonietti_agglomeration_2024}. Finally, PolyDG methods allow the local tuning of discretization parameters, like the polynomial order \cite{cangianiHpVersionDiscontinuousGalerkin2017}. This flexibility becomes particularly advantageous in the accurate approximation of propagating wavefronts, even within meshes that are not excessively refined.
\par
The main contributions of the present work are the following. On the one hand, we study the PolyDG semi-discrete formulation. The performed analysis contains an important improvement with respect the previous results for the FK equation in \cite{corti_discontinuous_2023}: indeed, the exponential dependence on time of the stability bound has been eliminated. Moreover, the analysis is more challenging as it involves quadratic nonlinear coupling terms that must be treated using the Gagliardo-Nirenberg inequality. On the other hand, we prove an \textit{a-priori} error estimate for the PolyDG semi-discrete formulation. Also in this case, some improvements have been achieved when compared to \cite{corti_discontinuous_2023}, namely, error estimates are devied without a assumptions on the model parameters.
To the best of our knowledge, these are the first results of this type for the semi-discrete form of the heterodimer model. 
\par
We consider as the application of the work, the simulation of $\alpha$-synuclein spreading in Parkinson's disease, whose polymerization leads to the formation of Lewy bodies \cite{spillantini_-synuclein_1997,hasegawaPrionlikeMechanismsPotential2017}. Sophisticated mathematical representation would provide valuable insights into the disease progression \cite{masel_quantifying_1999,carbonell_mathematical_2018} since the protein detection is only possible by histological examination of postmortem brain tissue \cite{korat_alpha-synuclein_2021}. Indeed, chemical ligands for positron emission tomography imaging (PET) do not exist \cite{korat_alpha-synuclein_2021}. 

\bigskip
The paper is organized as follows. Section~\ref{MathematicalModel} presents the heterodimer model. Section~\ref{PolyDGFormulation} is dedicated to deriving the semidiscrete PolyDG formulation. In Section~\ref{Analysis} we prove stability and \textit{a-priori} error estimates. In Section~\ref{TimeDiscret} we present the fully discrete formulation of the problem. In Section~\ref{Verification} theoretical convergence rates are verified via numerical simulations with known exact solutions. We also demonstrate the ability of the model to reproduce a traveling-wave solution. In Section~\ref{BrainApp}, we present an application in the context of neurodegenerative disorders, namely the spreading of $\alpha$-synuclein protein within a two-dimensional brain section. Finally, in Section~\ref{conclusion}, we draw some conclusions and discuss future developments.
\section{The mathematical model}
\label{MathematicalModel}
The equations of the heterodimer model that govern the description of the spatial and temporal dynamics of two biological populations $c(\boldsymbol{x},t)$  $q(\boldsymbol{x},t)$ within an open bounded domain $\Omega \subset \mathbb{R}^{d}$ ($d=2,3$) with Lipschitz boundary, over a time interval $(0,T]$ is the following:
\begin{equation}
\label{eq:HM_governingequation}
\begin{cases}
    \dfrac{\mathrm{d}c}{\mathrm{d}t}  =\nabla \cdot (\textbf{D} \nabla c) - k_{1} \, c - k_{12} \, c\, q + f_c &  \mathrm{in} \: \Omega \times (0,T], \\[6pt]
    \dfrac{\mathrm{d}q}{\mathrm{d}t}  = \nabla \cdot (\textbf{D} \nabla q) - \tilde{k}_1\, q + k_{12}\, q \, c + f_q & \mathrm{in} \: \Omega \times (0,T], \\[6pt]
    c = c_\mathrm{D} \mathrm{,} \quad q  = q_\mathrm{D} & \mathrm{on}  \; \Gamma_\mathrm{D}\times (0,T], \\[6pt]
    (\textbf{D}\nabla c )\cdot \boldsymbol{n} = 0 \mathrm{,} \quad (\textbf{D} \nabla q )\cdot \boldsymbol{n}  = 0  & \mathrm{on} \; \Gamma_\mathrm{N}\times (0,T], \\[6pt]
    c(0,\boldsymbol{x}) = c_{0}\mathrm{,} \quad q(0,\boldsymbol{x})  = q_{0} & \mathrm{in} \: \Omega.
\end{cases}
\end{equation}
The model takes the form of a time-dependent system of PDEs coupled through a non-linear reaction term. Both populations are affected by a biological destruction process at rate $k_1=k_1(\boldsymbol{x})$ and $\tilde{k}_1=\tilde{k}_1(\boldsymbol{x})$, respectively. The agents of first population are converted into ones of the second at a rate $k_{12} = k_{12}(\boldsymbol{x})$. The forcing terms $f_c=f_c(\boldsymbol{x},t)$ and $f_q=f_q(\boldsymbol{x},t)$ correspond to a generic external introduction of mass into the system. Suitable initial and boundary conditions complete the model. The boundary of the domain $\partial \Omega$ is partitioned into two disjoint subsets, namely $\Gamma_\mathrm{D}$ and $\Gamma_\mathrm{N}$. On $\Gamma_\mathrm{D}$, we impose fixed concentration values, while on $\Gamma_\mathrm{N}$, we enforce zero flux conditions, with $\boldsymbol{n}$ denoting the normal vector pointing outward from the domain. The initial conditions $q_0(\boldsymbol{x})$ and $c_0(\boldsymbol{x})$ correspond to initial states of the populations.
\par
\begin{assumption}
\label{CoefficientsRegularity}
For the subsequent analysis of the problem we require the following regularity for the data, forcing terms and boundary conditions:
\begin{itemize}
    \item  $\mathbf{D}\in L^{\infty}(\Omega,\mathbb{R}^{d\times d})$ and $\exists d_0 > 0: \; d_0\lvert\boldsymbol{\xi}\rvert^2\le\boldsymbol{\xi}^{\top}\mathbf{D}\boldsymbol{\xi} \quad \forall \boldsymbol{\xi}\in \mathbb{R}^d$;
    \item $k_{1},\tilde{k}_{1},k_{12}\in L^{\infty}_+(\Omega)$ and $\exists K_1>0:\,K_1\leq k_1$ and $\exists \tilde{K}_1>0:\,\tilde{K}_1\leq \tilde{k}_1$
    \item $f_c(\boldsymbol{x},t)$, $f_q(\boldsymbol{x},t)\in L^2((0,T];L^2(\Omega))$;
    \item $c_\mathrm{D}$, $q_\mathrm{D}\in L^2((0,\mathrm{T}],H^{1/2}(\partial\Omega_\mathrm{D}))$.
\end{itemize}
\end{assumption}
By analyzing the steady-state solutions of~\eqref{eq:HM_governingequation} with constant forcing term in the first equation $f_c(\boldsymbol{x},t)=k_0$ and null in the second $f_q(\boldsymbol{x},t)=0$, it can be proved that the model exhibits two distinct equilibria. The first is an unstable equilibrium at $c =k_0/k_1$ and $q = 0$, representing the extinction of the second population. The second equilibrium $c = \tilde{k}_1/k_{12}$ and $q = k_0/\tilde{k}_1 -k_1/k_{12}$ is stable and corresponds to a situation where both species coexist. This implies that even a small non-zero quantity of the second population, can initiate an irreversible chain reaction of conversion and spread \cite{eigen_prionics_1996}. Furthermore, for the observation of a traveling-wave solution, transitioning the system towards a coexistence state, it is crucial for the coefficients of Equations~\eqref{eq:HM_governingequation} to adhere to a specific relationship \cite{thompsonProteinproteinInteractionsNeurodegenerative2020a}:
\begin{equation}
\label{eq:coefficientsvalue} 
    k_0k_{12}-\tilde{k}_1k_1>0.
\end{equation}
Failure to satisfy this condition would result in having a stable equilibrium point in the first state implying that the model does not exhibits traveling-waves-solutions \cite{smoller_shock_1994, thompsonProteinproteinInteractionsNeurodegenerative2020a}. This is a fundamental difference with respect to the FK model and underlines the importance of parameter tuning for the specific modelling purposes with this type of model. Concerning the well-posedness analysis of the continuous formulation of systems of diffusion-reaction equations and the decay to spatially homogeneous solutions we refer to \cite{smoller_shock_1994}.
\subsection{Heterodimer model applied to neurodegeneration}
The mathematical formulation employed to address the kinetics of prion pathogenesis incorporates the representation of both healthy protein $c$ and misfolded protein $q$ concentrations. Heterodimer model analyzes prion conversion dynamics and the spatial dissemination of these infectious agents within the reference domain \cite{prusinerNovelProteinaceousInfectious1982,prusinerMolecularBiologyPrion1991}. Despite the fact that the prion conversion mechanism remains poorly understood, both the monomeric seeding \cite{prusinerMolecularBiologyPrion1991} and polymeric seeding \cite{harperModelsAmyloidSeeding1997,harrisCellularBiologyPrion1999} theory can be modelled with heterodimer equations. We explain graphically those concepts in Figures \ref{fig:monomeric_seeding} and \ref{fig:polymeric_seeding}, respectively.
\par
\begin{figure}[t]
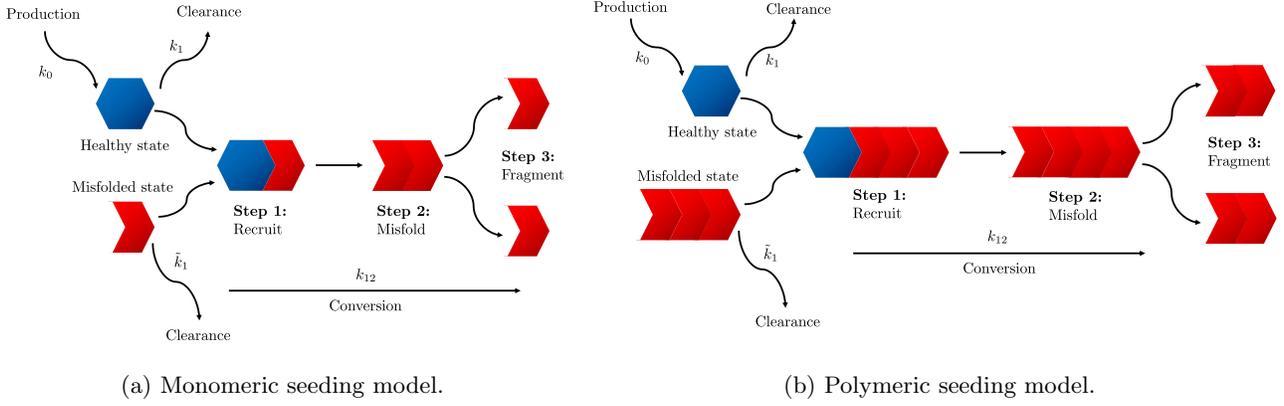

     \begin{subfigure}[b]{0.44\textwidth}
         \centering
         \includegraphics[width=\textwidth]{Monomericheterodimer}
         \caption{Monomeric seeding model.}
         \label{fig:monomeric_seeding}
     \end{subfigure}
          \begin{subfigure}[b]{0.54\textwidth}
         \centering
         \includegraphics[width=\textwidth]{Polymericheterodimer}
         \caption{Polymeric seeding model.}
         \label{fig:polymeric_seeding}
     \end{subfigure}
     \caption{Schematic representation of monomeric and polymeric seeding models.}
    \label{fig:PrionModel}
\end{figure}
\par
Concerning modelling choices in neurological application, we assume that healthy proteins are naturally produced by the organism at a rate $f_c = k_0(\boldsymbol{x})$, while that does not happen for the misfolded form $f_q = 0$. From a macroscopic perspective, the propagation of proteins throughout the domain is captured by the diffusion term in Equation \eqref{eq:HM_governingequation}. The spreading of proteins within the brain occurs via two main mechanisms, extracellular diffusion, and axonal transport, and can be mathematically characterized by employing the diffusion tensor defined as follows \cite{weickenmeierPhysicsbasedModelExplains2019}:
\begin{equation}
    \label{eq:DiffusionTensor}
    \mathbf{D}(\boldsymbol{x}) = d_\mathrm{ext}\mathbf{I} + d_\mathrm{axn}\boldsymbol{\bar{a}}(\boldsymbol{x}) \otimes \boldsymbol{\bar{a}}(\boldsymbol{x}),
\end{equation}
where the second term models the anisotropic diffusion of substances along the axonal directions, denoted by the vector $\boldsymbol{\bar{a}}(\boldsymbol{x})$, which is proportional to $d_\mathrm{axn}>0$. It is commonly assumed that the axonal transport is faster than the isotropic extracellular diffusion, therefore $d_{axn}\geq d_{ext} > 0$ \cite{weickenmeierPhysicsbasedModelExplains2019}. 
\section{Polytopal discontinuous Galerkin semi-discrete formulation}
\label{PolyDGFormulation}
In this section, we aim to present the spatial discretization of Equation~\eqref{eq:HM_governingequation} using the PolyDG method \cite{cangianiHpVersionDiscontinuousGalerkin2017, cangiani_hp-version_2014,antonietti_review_2016,antonietti_bubble_2009}. First, we introduce the symbol $x \lesssim y$ to mean that $\exists C > 0: x \leq C y$, where $C$ may depend on model parameters and the degree of polynomial approximation denoted as $p$, but it is independent of the mesh-size $h$. 
\subsection{Discrete setting: polytopal meshes and functional spaces}
We introduce a partition $\partition$ over the domain $\Omega$, consisting of non-empty, disjoint polygonal or polyhedral elements. Each mesh element $K\in\partition$ is associated with its measure denoted as $\lvert K \rvert$, and its diameter denoted as $h_K$. Consequently, the mesh size parameter is defined as $h =\max_{K\in\partition}\{h_K\}$. Additionally, we consider a partition $\faces$ of the mesh skeleton, which consists of disjoint $(d-1)$-dimensional subsets of $\Omega$ known as mesh faces $F$. These faces are defined in such a way that each face is either an interface between two distinct elements $T_1, T_2 \in \partition$ (where $F\subset T_1 \cap T_2$), or a boundary face (where $F\subset T_1 \cap \partial \Omega$). Mesh interfaces do not necessarily coincide with the faces of polytopic elements in the partition $\partition$, allowing for the handling of non-conforming partitions that involve hanging nodes, and partitions where the faces may not be planar or even connected \cite{dipietro:HHO}.
\par
In the two-dimensional setting, mesh faces correspond to line segments, while in three-dimensional domains, mesh faces consist of general polygonal surfaces that are further assumed to be subdivided into triangles. Henceforth, $\faces$ will denote the collective set of mesh faces or all open triangles belonging to a sub-triangulation of the mesh faces in three-dimensional domains. Consequently, $\faces$ is consistently defined as a set of $(d-1)$-dimensional simplexes. The set $\faces$ can be subdivided into two distinct subsets: $\facesinternal$, which comprises all interior interfaces, and $\facesboundary$, which consists of all boundary faces. The boundary faces can be further partitioned based on the type of boundary conditions specified in Equation~\eqref{eq:HM_governingequation}. Specifically, we denote by $\facesD$ as the portion of the boundary where Dirichlet conditions are imposed and by $\facesN$ as the portion where Neumann conditions are set. Additionally, we assume that the triangulation $\partition$ is constructed in such a way that any face $F\in\faces$ is entirely contained within either the Dirichlet boundary region, denoted as $\Gamma_\mathrm{D}$, or the Neumann boundary region, denoted as $\Gamma_\mathrm{N}$.
\par
We make the assumption that the partition $\partition$ satisfies the shape regularity conditions outlined in \cite{cangiani_hp-version_2014,antonietti_review_2016}. These conditions ensure the fulfilment of crucial inequalities such as the trace inequality, the inverse inequality, and the approximation property of the $L^2$-projection on general polygonal/polyhedral meshes. Furthermore, we impose additional regularity assumptions as detailed in  \cite{buffa_compact_2009,nirenbergdiscrete} to extend the Gagliardo-Nirenberg inequality into a discrete setting. 
\par
\begin{assumption}
\label{Mesh_quality_assumption}
  We assume that the sequences of meshes $\{\partition\}_h$ satisfies the following conditions:
  \begin{itemize}
      \item (Shape regularity) for every $K\in\partition$ and each $h\in(0,1]$ it holds:
      \begin{equation*}
           h_{K}^d \lesssim \lvert K \rvert \lesssim h_K^d;
      \end{equation*}
      \item (Contact regularity) for every $F\in\faces$ with $F \subset K$ for some $K\in\partition$ and $h\in(0,1]$ it holds $h_K^{d-1} \lesssim \lvert F \rvert$, where $\lvert F \rvert$ denotes the Hausdorff measure of $F$;
      \item (Submesh condition) For each $h\in(0,1]$, there exists a shape-regular, conforming, matching simplicial submesh \cite{dipietro:HHO} $\widetilde{\partition}$ such that
      \begin{enumerate}
          \item For each $\widetilde{K}\in \widetilde{\partition}$ there exists $K\in \partition$ such that $\widetilde{K} \subseteq K$,
          \item The family $\{\widetilde{\partition}\}_h$ satisfies shape and contact regularity assumptions,
          \item for any $\widetilde{K}\in\widetilde{\partition}$ and any $K\in\partition$ with $\widetilde{K}\subseteq K$, it holds that $h_K \lesssim h_{\widetilde{K}}$.
      \end{enumerate}
  \end{itemize}
\end{assumption}
We proceed by defining the functional spaces relevant to our analysis. 
We define the Broken Sobolev space:
\begin{equation}
\label{eq:brokensob}
    W_q = H^q(\Omega, \partition) = \{ v \in L^2(\Omega): \quad v\vert_{K} \in H^q(K) \; \forall K \in \partition\}.
\end{equation}
We will seek discrete solutions to the problem within the following discontinuous finite element space:
\begin{equation*}
    \Wh = \{ w\in L^2(\Omega): \quad w\vert_K \in \mathbb{P}_p(K) \; \forall K\in \partition \},
\end{equation*}
where $\mathbb{P}_p(K)$ is the space of polynomials of total degree $p \geq 1$ defined over the mesh element $K$.
\par
In conclusion, we recall some notation consistently followed throughout the remainder of the exposition. For vector-valued and tensor-valued functions previous definitions are to be intended as extended componentwise. From now on, all integrals over the domain $\Omega$ should be intended as the sum of the contributions over each element $K$ of the partition $\partition$, in particular, $\| \cdot \| = (\sum_{K\in \partition} \| \cdot \|_{L^2(K)}^2)^{1/2}$.  Analogously, the $L^2$-norm on a set of faces $\faces$ will be indicated as $\|\cdot \|_{\faces} = (\sum_{F\in\faces}\| \cdot \|_{L^2(F)}^2)^{1/2}$.
\par
We introduce the following trace-related operators \cite{arnold_unified_2002}. Each interface $F \in \facesinternal$ is shared by two mesh elements denoted by $K^{\pm}$, we denote by $\boldsymbol{n}^{\pm}$ the unit normal vector on face $F$ directed outward with respect to $K^{\pm}$, respectively. For any sufficiently regular scalar-valued function $v$ and vector-valued function $\boldsymbol{w}$ we set:
\begin{itemize}
    \item the jump operator $\jumpl \cdot \jumpr$ on $F \in \facesinternal$ as $\jumpl v \jumpr = v^+ \boldsymbol{n}^+ + v^-\boldsymbol{n}^- $ and $\jumpl \boldsymbol{w}  \jumpr = \boldsymbol{w}^+ \cdot \boldsymbol{n}^+ + \boldsymbol{w}^- \cdot \boldsymbol{n}^-$,
    \item the average operator $\averagel \cdot \averager$ on $F \in \facesinternal$ as $\averagel v \averager = \frac{1}{2}(v^+ + v^-)$  and $\averagel \textbf{w} \averager = \frac{1}{2}(\textbf{w}^+ + \textbf{w}^-)$,
\end{itemize}
where $v^{\pm}$ and  $\boldsymbol{w}^{\pm}$ denote the traces on $F$ of the function defined over $K^{\pm}$, respectively. Analogously, we define the jump and average operators on the faces $F \in \facesD$, associate with the boundary mesh element $K$ with outward unit normal vector $\boldsymbol{n}$ as: 
\begin{itemize}
    \item the jump operator $\jumpl \cdot \jumpr$ on $F \in \facesD$ with Dirichlet condition $g$ and $\boldsymbol{g}$: $\jumpl v \jumpr = (v-g)\boldsymbol{n}$ and $\jumpl\boldsymbol{w}\jumpr = (\boldsymbol{w}-\boldsymbol{g})\cdot \boldsymbol{n}$;
    \item the average operator $\averagel\cdot\averager$ on $F\in\facesD$: $\averagel v \averager = v$ and $\averagel \boldsymbol{w} \averager = \boldsymbol{w}$.
\end{itemize}

\subsection{PolyDG semi-discrete formulation}
To derive the semi-discrete formulation of the problem, we introduce the discontinuity penalization function $\gamma_F$:
\begin{equation}
\label{penltycoeff}
  \gamma_F = \gamma_0 
  \begin{dcases}
    \max\left\{\{d^K\}_\mathrm{H},\{k^K\}_\mathrm{H}\right\}\dfrac{p^2}{\{h_K\}_\mathrm{H}}, & \mathrm{on} \; F \in \facesinternal, \\
    \max\left\{d^K, k^K\right\}\dfrac{p^2}{h_K},                               & \mathrm{on} \; F \in \facesD,
  \end{dcases}
\end{equation}
where $\gamma_0$ is a constant parameter that should be chosen sufficiently large to ensure the stability of the discrete formulation, $d^K = \|\sqrt{\mathbf{D}|_K}\|^2$, and $k^K = \|\,(1 + k_{12}|_K)(k_1|_K + \tilde{k}_1|_K)\|$. In Equation \eqref{penltycoeff}, the symbol $\{\cdot\}_\mathrm{H}$ denotes the the harmonic average operator, which is defined as $\{\omega^K\}_\mathrm{H} = \frac{2\omega^{K^+} \omega^{K^-}}{\omega^{K^+}+\omega^{K^-}}$ for any function $\omega^K$ associated with the partition $\partition$.
We define for any $u_h, w_h, v_h \in \Wh$ the following bilinear and trilinear forms:
\begin{subequations}
\begin{gather}
\mathcal{A}(u_h,v_h) = \int_{\Omega} \left (\mathbf{D}  \nabla_h u_h \right) \cdot \nabla_h v_h + \sum_{F \in \facesinternal \cup \facesD} \int_{F} \left(\gamma_F \jumpl u_h \jumpr \cdot \jumpl v_h \jumpr - \averagel \mathbf{D}  \nabla_h u_h \averager \cdot \jumpl v_h \jumpr - \jumpl u_h \jumpr \cdot \averagel \mathbf{D}  \nabla_h v_h \averager \right), \\
r_{N}(u_h,w_h,v_h) = \int_\Omega k_{12}\, u_h \, w_h \, v_h, \qquad
r_{L}(u_h,v_h) = \int_\Omega k_{1}\, u_h \, v_h, \qquad
\widetilde{r}_{L}(u_h,v_h) = \int_\Omega \widetilde{k}_{1}\, u_h \, v_h, \\
F_c(v_h) = \int_\Omega f_c \, v_h, \qquad
F_q(v_h) = \int_\Omega f_q \, v_h,
\end{gather}
\end{subequations}    
where we denote with $c_{\mathrm{D}h}$ and $q_{\mathrm{D}h}$ suitable approximations of $c_{\mathrm{D}}$ and $q_{\mathrm{D}}$ in $\Wh$, respectively. Moreover, we are using the elementwise gradient $\nabla_h \cdot$ \cite{quarteroni:EDP}. Finally, the semi-discrete formulation of the problem reads:
\par
Given suitable discrete approximations $c_{0h},q_{0h}\in \Wh$ of the initial conditions of Equation~\eqref{eq:HM_governingequation}, Find $(c_h(t),q_h(t))\in \Wh\times\Wh$ for each $t>0$ such that:   
\begin{equation}
\label{eq:weakformulation}
\begin{dcases}
  \left(\dfrac{\partial c_h}{\partial t}, w_h\right)_{\Omega} + \mathcal{A}(c_h,w_h) + r_L(c_h,w_h) + r_N(c_h,q_h,w_h) = F_c(w_h) & \forall w_h\in \Wh, \\
 \left(\dfrac{\partial q_h}{\partial t}, v_h\right)_{\Omega} + \mathcal{A}(q_h,v_h) + \widetilde{r}_L(q_h,v_h) - r_N(q_h,c_h,v_h) = F_q(v_h) & \forall v_h\in \Wh.
\end{dcases}
\end{equation}
\begin{remark}
    The dependency of the penalty coefficient expression on the reaction coefficients in Equation \eqref{penltycoeff} is justified by the forthcoming analysis of Theorem \ref{APrioriErrorEstimate}, which provides a relation between the coercivity constant and the reaction parameters to be respected.
\end{remark}
\section{Analysis of the semi-discrete PolyDG formulation}
\label{Analysis}
Before delving into the main analyses, we present a set of preliminary definitions and recall technical results that are instrumental for the forthcoming analysis. We define the following DG-norms:
\begin{subequations}
\begin{align}
     \| c \|^2_\mathrm{DG} = & \left\| \sqrt{\mathbf{D}}\nabla_h c \right\|^2 +  \left\| \sqrt{\gamma_F} \jumpl c \jumpr \right\|^2_{\facesinternal \cup \facesD} \qquad \forall c\in W_1, \\
     \tnorm{c}\tnorm_\mathrm{DG}^2 = & \| c \|_\mathrm{DG}^2 + \left\| \gamma_F^{-1/2} \averagel \mathbf{D}\nabla_h c \averager \right\|_{ \facesinternal \cup \facesD}^2 \qquad \forall c\in W_2,
\end{align}
\end{subequations}
where $W_1$ and $W_2$ are the broken Sobolev spaces defined as Equation \eqref{eq:brokensob}.
We recall that the previous norms are equivalent in the space of discontinuous functions $\Wh$, under Assumption~\ref{Mesh_quality_assumption} thanks to the following trace-inverse inequality \cite{cangiani_hp-version_2014}:
\begin{equation}
     \label{trace_inverse}
     \| c \|^2_{L^2(\partial K)} \lesssim \frac{p^2}{h_K}\| c \|^2_{L^2(K)} \quad \forall c \in \mathbb{P}_p (K)\quad \forall K\in\partition.
\end{equation}
We define also the energy norms:
\begin{equation}
\label{eq:energynorm}
\| c(t) \|_{\varepsilon}^2 = \| c(t) \|^2 + \int_{0}^{t} \left(\| c(s) \|_\mathrm{DG}^2 + \| c(s) \|^2\right) \mathrm{d}s,  \qquad
\tnorm c(t)\tnorm_{\varepsilon}^2  = \| c(t) \|^2 + \int_{0}^{t} \left(\tnorm c(s)\tnorm_\mathrm{DG}^2 + \| c(s) \|^2\right) \mathrm{d}s. 
\end{equation}
\subsection{Preliminary technical results}
It can be proven \cite{cangianiHpVersionDiscontinuousGalerkin2017} that the bilinear form $\mathcal{A}(\cdot,\cdot)$ is continuous and coercive, namely:
\begin{proposition}
    \label{ContinuiutyCoercivityA}
    Let Assumption~\ref{Mesh_quality_assumption} be satisfied, then the bilinear form $\mathcal{A}(\cdot,\cdot)$ is such that:
    \begin{itemize}
        \item  $|\mathcal{A}(u_h,v_h)| \lesssim \| u_h \|_\mathrm{DG}\| v_h \|_\mathrm{DG} \quad \forall u_h,v_h \in \Wh$;
        \item  $|\mathcal{A}(u,v_h)| \lesssim \tnorm u\tnorm_\mathrm{DG}\| v_h \|_\mathrm{DG} \quad \forall u \in W_2, v_h \in \Wh$;
        \item $ \mathcal{A}(u_h,u_h) \geq \mu \|u_h\|_\mathrm{DG}^2 \quad \forall u_h \in \Wh$; 
    \end{itemize}
    where $\mu$ is independent of $h$. The third bound holds provided that the penalty parameter $\gamma_0$ defined in Equation~\eqref{penltycoeff} is chosen large enough.
\end{proposition}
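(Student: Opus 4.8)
The statement is the standard continuity/coercivity result for a symmetric interior penalty PolyDG bilinear form, and I would prove it following \cite{cangianiHpVersionDiscontinuousGalerkin2017}. The single technical ingredient that drives all three bounds is the inverse--trace estimate for the consistency (flux) terms: for every $v_h\in\Wh$,
\begin{equation}
\label{eq:keyfluxbound}
  \left\| \gamma_F^{-1/2}\averagel \mathbf{D}\nabla_h v_h \averager \right\|^2_{\facesinternal\cup\facesD} \lesssim \frac{1}{\gamma_0}\left\| \sqrt{\mathbf{D}}\nabla_h v_h\right\|^2 .
\end{equation}
I would establish \eqref{eq:keyfluxbound} face by face. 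On an interior face $F$ shared by $K^\pm$, bound $|\averagel\mathbf{D}\nabla_h v_h\averager|^2\le\tfrac12\sum_{\pm}|\mathbf{D}|_{K^\pm}\nabla v_h|_{K^\pm}|^2$ pointwise, use $|\mathbf{D}|_K\nabla v_h|\le\|\sqrt{\mathbf{D}|_K}\|\,|\sqrt{\mathbf{D}|_K}\nabla v_h|$ together with $d^K=\|\sqrt{\mathbf{D}|_K}\|^2$, then apply the trace--inverse inequality \eqref{trace_inverse} (valid on polytopic elements under Assumption~\ref{Mesh_quality_assumption}) to pass from $\|\cdot\|_{L^2(F)}$ to $(p^2/h_K)\|\cdot\|_{L^2(K)}$; finally observe that $\gamma_F\ge\gamma_0\{d^K\}_\mathrm{H}\,p^2/\{h_K\}_\mathrm{H}$, so the penalty weight is precisely tailored to absorb the element-wise factors $d^{K^\pm}p^2/h_{K^\pm}$ generated above, leaving the claimed $\gamma_0^{-1}\|\sqrt{\mathbf{D}}\nabla_h v_h\|^2$. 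Dirichlet faces are handled identically and more simply, since the average reduces to the single interior trace.

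Granting \eqref{eq:keyfluxbound}, the two continuity bounds follow by splitting $\mathcal{A}(u_h,v_h)$ into its three groups of terms and applying Cauchy--Schwarz. The volume term is bounded by $\|\sqrt{\mathbf{D}}\nabla_h u_h\|\,\|\sqrt{\mathbf{D}}\nabla_h v_h\|$; the penalty term by $\|\sqrt{\gamma_F}\jumpl u_h\jumpr\|_{\facesinternal\cup\facesD}\|\sqrt{\gamma_F}\jumpl v_h\jumpr\|_{\facesinternal\cup\facesD}$; and each consistency term, after inserting $\gamma_F^{1/2}\gamma_F^{-1/2}$, by $\|\gamma_F^{-1/2}\averagel\mathbf{D}\nabla_h u_h\averager\|_{\facesinternal\cup\facesD}\|\sqrt{\gamma_F}\jumpl v_h\jumpr\|_{\facesinternal\cup\facesD}$, and symmetrically. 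Summing and using \eqref{eq:keyfluxbound} on the polynomial arguments gives $|\mathcal{A}(u_h,v_h)|\lesssim\|u_h\|_\mathrm{DG}\|v_h\|_\mathrm{DG}$. For the second bound, where $u\in W_2$ is not a polynomial and \eqref{trace_inverse} is unavailable, I simply keep the factor $\|\gamma_F^{-1/2}\averagel\mathbf{D}\nabla_h u\averager\|_{\facesinternal\cup\facesD}$ untouched: this is exactly the extra term appearing in $\tnorm\cdot\tnorm_\mathrm{DG}$, so $|\mathcal{A}(u,v_h)|\lesssim\tnorm u\tnorm_\mathrm{DG}\|v_h\|_\mathrm{DG}$.

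For coercivity I expand
\begin{equation*}
  \mathcal{A}(u_h,u_h)=\left\|\sqrt{\mathbf{D}}\nabla_h u_h\right\|^2+\left\|\sqrt{\gamma_F}\jumpl u_h\jumpr\right\|^2_{\facesinternal\cup\facesD}-2\sum_{F\in\facesinternal\cup\facesD}\int_F\averagel\mathbf{D}\nabla_h u_h\averager\cdot\jumpl u_h\jumpr,
\end{equation*}
bound the cross term by Young's inequality with a parameter $\varepsilon>0$ by $\varepsilon\|\gamma_F^{-1/2}\averagel\mathbf{D}\nabla_h u_h\averager\|^2_{\facesinternal\cup\facesD}+\varepsilon^{-1}\|\sqrt{\gamma_F}\jumpl u_h\jumpr\|^2_{\facesinternal\cup\facesD}$, and invoke \eqref{eq:keyfluxbound} to replace the first summand by $(\varepsilon C/\gamma_0)\|\sqrt{\mathbf{D}}\nabla_h u_h\|^2$. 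Choosing $\varepsilon$ fixed (say $\varepsilon=2$) and then $\gamma_0$ large enough that $\varepsilon C/\gamma_0\le\tfrac12$ and $\varepsilon^{-1}\le\tfrac12$ yields $\mathcal{A}(u_h,u_h)\ge\tfrac12\|\sqrt{\mathbf{D}}\nabla_h u_h\|^2+\tfrac12\|\sqrt{\gamma_F}\jumpl u_h\jumpr\|^2_{\facesinternal\cup\facesD}\ge\tfrac12\|u_h\|_\mathrm{DG}^2$, i.e.\ $\mu=\tfrac12$, which is independent of $h$. The main obstacle is establishing \eqref{eq:keyfluxbound}: the delicate bookkeeping is to check that the harmonic-average structure of $\gamma_F$ in \eqref{penltycoeff} genuinely controls the element-wise trace--inverse constants $d^{K^\pm}p^2/h_{K^\pm}$ uniformly, with the hidden constant depending only on the shape/contact-regularity constants of Assumption~\ref{Mesh_quality_assumption} (and on $p$ only through the already-extracted $p^2$), not on $h$ or on $\mathbf{D}$; the rest is routine Cauchy--Schwarz and Young bookkeeping.
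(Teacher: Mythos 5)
Your proof is correct and follows essentially the same route as the paper, which does not reproduce the argument but delegates it to the standard symmetric-interior-penalty analysis of \cite{cangianiHpVersionDiscontinuousGalerkin2017}: the inverse--trace bound on the averaged flux, the Cauchy--Schwarz splitting for the two continuity estimates, and the Young-inequality absorption for coercivity are exactly the expected ingredients. One small caveat: your closing claim that the hidden constant in the key flux bound is independent of $\mathbf{D}$ is too strong — with the arithmetic average in $\mathcal{A}$ but only the harmonic average in $\gamma_F$ from \eqref{penltycoeff}, the constant picks up the local contrast of $\mathbf{D}$ (controlled by $\|\mathbf{D}\|_{L^\infty}/d_0$), and the trace--inverse inequality \eqref{trace_inverse} should be applied to the polynomial $\nabla_h v_h$ rather than to $\sqrt{\mathbf{D}}\nabla_h v_h$ unless $\mathbf{D}$ is piecewise constant — but this is harmless here because the paper's $\lesssim$ explicitly allows dependence on the model parameters.
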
 
We present two relevant inequalities that play a crucial role in deriving the stability estimate of the discrete solution.
\begin{proposition}
     \label{Gagliardo-Nirenberg} (Discrete Gagliardo-Nirenberg inequality)
      Under Assumption~\ref{Mesh_quality_assumption},
      for any $u_h \in \Wh$, there exists a constant $C_{\mathrm{G}_d}$, eventually depending on $p$ but independent on $h$, such that:
     \begin{equation*}
         \| u_h \|_{L^q(\Omega)} \leq C_{\mathrm{G}_d} \| u_h \|_\mathrm{DG}^s\,\| u_h \|^{1-s}
     \end{equation*}
     with $s\in(0,1]$ and $q$ satisfying:
     \begin{equation*}
         \frac{1}{q}= s \left( \frac{1}{2}-\frac{1}{d}\right) +\frac{1-s}{2}.
     \end{equation*}
\end{proposition}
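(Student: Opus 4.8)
Here is the plan I would follow.

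\medskip

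The plan is to obtain the estimate by comparing $u_h$ with a conforming interpolant and transferring the classical (continuous) Gagliardo--Nirenberg inequality from $H^1(\Omega)$ to $\Wh$, the nonconformity being controlled by the penalization terms built into $\|\cdot\|_\mathrm{DG}$. Let $V_h^{\mathrm c}\subset H^1(\Omega)$ be the space of continuous piecewise polynomials of degree $p$ over the simplicial submesh $\widetilde{\partition}$ supplied by Assumption~\ref{Mesh_quality_assumption} (recall that $\Wh$ is contained in the broken polynomial space over $\widetilde{\partition}$), and let $\mathcal{E}_h\colon\Wh\to V_h^{\mathrm c}$ be the associated averaging (Oswald-type) operator. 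I would start from its well-known approximation properties, see \cite{buffa_compact_2009,nirenbergdiscrete} and the references therein, transported to polytopal meshes through the equivalence $h_{\widetilde K}\simeq h_K$ of Assumption~\ref{Mesh_quality_assumption}: for every $u_h\in\Wh$,
\begin{equation*}
  \|u_h-\mathcal{E}_h u_h\|_{L^2(\widetilde K)}^2 \lesssim h_{\widetilde K}\!\!\sum_{F\subset\partial K}\!\|\jumpl u_h\jumpr\|_{L^2(F)}^2 , \qquad
  \|\nabla_h(u_h-\mathcal{E}_h u_h)\|^2 \lesssim \!\!\sum_{F\in\facesinternal\cup\facesD}\!\! h_F^{-1}\|\jumpl u_h\jumpr\|_{L^2(F)}^2 ,
\end{equation*}
where $K\in\partition$ denotes the element containing $\widetilde K$, and the first bound is summed over $\widetilde K\in\widetilde{\partition}$.

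Next I would turn these into bounds in terms of $\|u_h\|_\mathrm{DG}$ and $\|u_h\|$. By the uniform ellipticity $\mathbf{D}\ge d_0\mathbf{I}$ and the choice of $\gamma_F$ in \eqref{penltycoeff} --- which guarantees $h_F^{-1}\lesssim\gamma_F$ up to a factor depending only on $p$ and the bounded model coefficients, harmless since $C_{\mathrm{G}_d}$ may depend on $p$ --- one gets $\|\nabla_h u_h\|\lesssim\|u_h\|_\mathrm{DG}$ and $\sum_F h_F^{-1}\|\jumpl u_h\jumpr\|_{L^2(F)}^2\lesssim\|u_h\|_\mathrm{DG}^2$, hence
\begin{equation*}
  \|\nabla(\mathcal{E}_h u_h)\| \le \|\nabla_h u_h\|+\|\nabla_h(u_h-\mathcal{E}_h u_h)\| \lesssim \|u_h\|_\mathrm{DG} .
\end{equation*}
Estimating the face terms instead by the trace--inverse inequality \eqref{trace_inverse} gives $\sum_F h_F\|\jumpl u_h\jumpr\|_{L^2(F)}^2\lesssim\|u_h\|^2$, so that $\|u_h-\mathcal{E}_h u_h\|\lesssim\|u_h\|$ and $\|\mathcal{E}_h u_h\|\le\|u_h\|+\|u_h-\mathcal{E}_h u_h\|\lesssim\|u_h\|$; using in addition the broken Poincar\'e inequality $\|u_h\|\lesssim\|u_h\|_\mathrm{DG}$ --- valid because $\|\cdot\|_\mathrm{DG}$ penalizes the traces of $u_h$ on $\facesD$ and $\Gamma_{\mathrm D}$ has positive measure --- yields $\|\mathcal{E}_h u_h\|_{H^1(\Omega)}\lesssim\|u_h\|_\mathrm{DG}$.

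Now I would split $u_h=\mathcal{E}_h u_h+(u_h-\mathcal{E}_h u_h)$. For the conforming part, the classical Gagliardo--Nirenberg inequality on the bounded Lipschitz domain $\Omega$ gives $\|\mathcal{E}_h u_h\|_{L^q(\Omega)}\lesssim\|\mathcal{E}_h u_h\|_{H^1(\Omega)}^s\,\|\mathcal{E}_h u_h\|^{1-s}\lesssim\|u_h\|_\mathrm{DG}^s\,\|u_h\|^{1-s}$ for the stated pair $(q,s)$ (for $d=2$ the endpoint $s=1$ is excluded and $q$ is any finite exponent). For the remainder $w:=u_h-\mathcal{E}_h u_h$, a piecewise polynomial over $\widetilde{\partition}$, I would apply the $L^q$--$L^2$ inverse inequality on each $\widetilde K$ (its exponent $\tfrac dq-\tfrac d2$ equals $-s$ by the relation between $q$, $s$, $d$), factor $\|w\|_{L^2(\widetilde K)}^{q}=\|w\|_{L^2(\widetilde K)}^{(1-s)q}\,\|w\|_{L^2(\widetilde K)}^{sq}$, bound the first factor by $\|w\|^{(1-s)q}$ and the second by the elementwise approximation estimate above, and sum: with $\sum_i a_i^{sq/2}\le(\sum_i a_i)^{sq/2}$ --- available precisely when $sq\ge2$, i.e.\ $s\ge d/(d+2)$ --- all mesh-dependent factors collapse (using $h_{\widetilde K}\simeq h_F$ and the penalty weights) and one is left with $\|w\|_{L^q(\Omega)}\lesssim\|w\|^{1-s}\|u_h\|_\mathrm{DG}^s\lesssim\|u_h\|^{1-s}\|u_h\|_\mathrm{DG}^s$. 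The triangle inequality then closes the proof for $s\ge d/(d+2)$, and the range $0<s<d/(d+2)$ follows by Lebesgue interpolation of $L^q(\Omega)$ between $L^2(\Omega)$ and $L^{\bar q}(\Omega)$, where $\bar q$ is the exponent attached to $\bar s=d/(d+2)$ and the interpolation parameter is $s/\bar s$, a case already settled (for $d=2$ one takes $\bar q$ arbitrarily large).

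I expect the delicate point to be the remainder term: controlling the nonconformity $w=u_h-\mathcal{E}_h u_h$ in the Lebesgue norm $L^q$, rather than just in $L^2$, with an $h$-independent constant. This requires a precise balancing of the inverse inequality on $\widetilde{\partition}$ (which carries a negative power of $h_{\widetilde K}$), the jump-based approximation estimate of $\mathcal{E}_h$ (which carries a positive power of $h_{\widetilde K}$), and the penalty scaling $\gamma_F\sim p^{2}/h_F$, so that every mesh factor cancels; the threshold $s\ge d/(d+2)$ is exactly where this cancellation is self-contained, the complementary regime being recovered by interpolation. A secondary, routine task is to check that all constants hidden in $\lesssim$ depend only on $p$, the shape-regularity constants of Assumption~\ref{Mesh_quality_assumption}, and the model coefficients, which is where the structure of $\gamma_F$ in \eqref{penltycoeff} is used.
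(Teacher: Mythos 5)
The paper does not actually prove this proposition: it states it and defers entirely to the cited reference for a ``complete proof'', so there is no in-paper argument to compare against. Your sketch reconstructs what that literature does, and it is essentially the standard (and correct) route: average $u_h$ onto a conforming space built on the simplicial submesh, apply the continuous Gagliardo--Nirenberg inequality to the conforming part, and absorb the nonconforming remainder using the jump-approximation bounds of the Oswald operator together with local inverse inequalities, the exponent bookkeeping ($d/q-d/2=-s$, the threshold $sq\ge 2$, and Lebesgue interpolation below it) all checking out. Three caveats are worth recording. First, the local Oswald estimate should be stated with the jumps over the faces \emph{neighbouring} $\widetilde K$ (counted with bounded overlap under shape regularity), not over all of $\partial K$; as written, summing your bound over the possibly many sub-simplices of a single polytopal element would inflate the constant. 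Second, your reduction $\|\mathcal{E}_h u_h\|_{H^1(\Omega)}\lesssim\|u_h\|_\mathrm{DG}$ genuinely needs the discrete Poincar\'e inequality, hence $|\Gamma_\mathrm{D}|>0$; for pure Neumann data the stated inequality is false for constants (left side positive, right side zero), so the proposition implicitly carries this hypothesis and your proof correctly exposes where it enters. Third, you rightly exclude $s=1$ when $d=2$ (where $q=\infty$ and $H^1\not\hookrightarrow L^\infty$), an endpoint the proposition's range $s\in(0,1]$ glosses over; the paper only ever uses $q=3$ and $q=4$, so this does not affect its analysis.
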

The discrete Gagliardo-Nirenberg inequality can be found with complete proof in \cite{nirenbergdiscrete}. For brevity, we present the Perov inequality without including the proof, which can be found in \cite{Perov:inequality}:
\begin{proposition}
\label{Perov_inequality}
  Let $a,b$ be two positive constants, let $\eta>1$, and let $u\in L^\infty_+(0,\hat{t})=\{ u\in L^\infty(0,\hat{t}): \; u(t)\geq0\;a.e.\,\}$ such that:
    \begin{equation}
        u(t) \leq a + b \int_0^t u^\eta(s) \mathrm{d}s, \qquad \mathrm{for\;almost\;any}\; t\in(0,\hat{t}),
    \end{equation}
where $\hat{t}$ is such that:
\begin{equation}
    \hat{t} < \dfrac{\eta-1}{a^{\eta-1}b}.
    \label{eq:timebound}
\end{equation}
Then for almost any $t\in(0,\hat{t})$ we have:
\begin{equation}
    u(t) \leq \dfrac{a}{\left(1-a^{\eta-1}\,b\,(\eta-1)t\right)^\frac{1}{\eta-1}}.
\end{equation}
\end{proposition}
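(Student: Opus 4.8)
The plan is to read Proposition~\ref{Perov_inequality} as an instance of a nonlinear Gronwall (Bihari--LaSalle) inequality and to prove it by comparison with the explicit solution of the scalar ODE $V' = b\,V^\eta$. First I would introduce the majorant $U(t) := a + b\int_0^t u^\eta(s)\,\mathrm{d}s$. Since $u \in L^\infty_+(0,\hat t)$, the map $u^\eta$ belongs to $L^\infty(0,\hat t) \subset L^1(0,\hat t)$, so $U$ is Lipschitz on $[0,\hat t]$ with Lipschitz constant $b\,\|u\|_{L^\infty(0,\hat t)}^\eta$, it satisfies $U(0) = a$, it is differentiable almost everywhere with $U'(t) = b\,u^\eta(t)$, and its range is contained in the compact interval $[a,M]$, where $M := a + b\,\hat t\,\|u\|_{L^\infty(0,\hat t)}^\eta$, which is in particular bounded away from $0$. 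Because $u(t) \le U(t)$ and $r \mapsto r^\eta$ is nondecreasing on $[0,\infty)$, this yields the differential inequality $U'(t) \le b\,U(t)^\eta$ for almost every $t \in (0,\hat t)$.

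Next I would linearise the inequality through the substitution $g(t) := U(t)^{1-\eta}$. As $\phi(r) = r^{1-\eta}$ is of class $C^1$ on $[a,M] \subset (0,\infty)$ and $U$ is Lipschitz, $g = \phi \circ U$ is Lipschitz, hence absolutely continuous, and the chain rule holds almost everywhere: $g'(t) = (1-\eta)\,U(t)^{-\eta}\,U'(t)$. Inserting $U'(t) \le b\,U(t)^\eta$ and using that $1 - \eta < 0$ gives $g'(t) \ge (1-\eta)\,U(t)^{-\eta}\,b\,U(t)^\eta = -(\eta-1)\,b$ for almost every $t$. Integrating the absolutely continuous function $g$ over $(0,t)$ then produces $U(t)^{1-\eta} \ge a^{1-\eta} - (\eta-1)\,b\,t = a^{1-\eta}\bigl(1 - a^{\eta-1}(\eta-1)\,b\,t\bigr)$.

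The role of the hypothesis~\eqref{eq:timebound} on $\hat t$ is precisely to guarantee that the factor $1 - a^{\eta-1}(\eta-1)\,b\,t$ stays strictly positive on $(0,\hat t)$ --- equivalently, that the comparison solution $V(t) = a\bigl(1 - a^{\eta-1}(\eta-1)\,b\,t\bigr)^{-1/(\eta-1)}$ of $V' = b\,V^\eta$, $V(0) = a$, does not blow up before time $\hat t$. Hence both sides of the previous inequality are positive, and applying the decreasing map $r \mapsto r^{1/(1-\eta)}$ (well defined on $(0,\infty)$ since $1/(1-\eta) < 0$) reverses it into $U(t) \le a\bigl(1 - a^{\eta-1}(\eta-1)\,b\,t\bigr)^{-1/(\eta-1)}$, and $u(t) \le U(t)$ then gives the claim for almost every $t \in (0,\hat t)$. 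I expect the only delicate point --- the one I would single out as the main obstacle --- to be the rigorous justification of the almost-everywhere chain rule and of the fundamental theorem of calculus for $t \mapsto U(t)^{1-\eta}$: it is the essential boundedness of $u$ (together with $U \ge a > 0$) that keeps $U$ Lipschitz with values in a compact subinterval of $(0,\infty)$, where the power nonlinearity is smooth, and this is what makes the whole argument go through.
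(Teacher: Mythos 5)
The paper does not actually prove this proposition: it states it and defers to the cited reference for the proof, so there is no in-paper argument to compare against. Your proposal supplies a complete, self-contained proof via the standard Bihari--LaSalle comparison argument (majorize $u$ by the absolutely continuous function $U(t)=a+b\int_0^t u^\eta$, derive $U'\le bU^\eta$ a.e., linearise with $g=U^{1-\eta}$, integrate, and invert). The technical points you single out --- that $u\in L^\infty_+$ makes $U$ Lipschitz with values in a compact subinterval of $(0,\infty)$, so that the a.e.\ chain rule and the fundamental theorem of calculus apply to $\phi\circ U$ with $\phi(r)=r^{1-\eta}$ smooth there --- are exactly the right ones, and the argument is sound.

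One point you assert without checking, and which in fact fails as stated: you claim that hypothesis~\eqref{eq:timebound} ``precisely guarantees'' that $1-a^{\eta-1}(\eta-1)\,b\,t>0$ on $(0,\hat t)$. The positivity of that factor requires $t<\tfrac{1}{(\eta-1)\,a^{\eta-1}b}$, whereas~\eqref{eq:timebound} only imposes $\hat t<\tfrac{\eta-1}{a^{\eta-1}b}$; these thresholds coincide only for $\eta=2$, and for $\eta>2$ the stated hypothesis admits times at which the right-hand side of the conclusion is not even defined. This is almost certainly a typo in the proposition (the threshold should be $\tfrac{1}{(\eta-1)\,a^{\eta-1}b}$, and in the paper's application $\eta=d\in\{2,3\}$, so the discrepancy matters for $d=3$), but since your proof genuinely needs the smaller threshold at the inversion step, you should have flagged the mismatch rather than attributed to~\eqref{eq:timebound} a consequence it does not have.
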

\subsection{Stability analysis of the semidiscrete formulation}
We can now proceed to prove a stability estimate for the discrete formulation~\eqref{eq:weakformulation}.

\begin{theorem}
    \label{StabilityEstimate}
    Assume that Assumptions~\ref{CoefficientsRegularity} and~\ref{Mesh_quality_assumption} are satisfied, and assume that the stability parameter $\gamma_0$ defined in Equation~\eqref{penltycoeff} has been chosen sufficiently large. Let $(c_h(t),q_h(t))\in W_h^{DG}\times W_h^{DG}$ be the solution of Problem (\ref{eq:weakformulation}) with $c_\mathrm{D}=q_\mathrm{D}=0$ for any $t\in(0,\widetilde{t}]$, with $\widetilde{t}\le T$ introduced in Proposition~\ref{Perov_inequality}. Then:
    \begin{equation}
        \label{eq:StabilityEstimate}
        \| c_h \|_{\varepsilon}^2+ \| q_h \|_{\varepsilon}^2 \leq \dfrac{\left(\| c_{0h} \|^2+ \| q_{0h} \|^2 + \displaystyle\int_{0}^{T} \left(\| f_c(s) \|^2+ \| f_q(s) \|^2\right) \mathrm{d}s \right)}{\left(\widetilde{\mu}^{d-1}- \widetilde{\mu}^{-1}\xi t\left(\| c_{0h} \|^2+ \| q_{0h} \|^2 + \displaystyle\int_{0}^{T} \left(\| f_c(s) \|^2+ \| f_q(s) \|^2\right)\right)^{d-1}\right)^{\frac{1}{d-1}}}
    \end{equation}
    where:
    \begin{align*}
        \widetilde{\mu}= \min\{1,\mu,K_1,\tilde{K}_1\}
        && \xi = \frac{C_{\mathrm{G}_d}^3 K_{12}^2}{2^{d-2}\mu} 
    \end{align*}
    having defined $K_{12}= \| k_{12} \|_{L^\infty}$, $K_1 = \| k_1 \|_{L^\infty}$, $\widetilde{K}_1 = \| \tilde{k}_1 \|_{L^\infty}$, and $C_{\mathrm{G}_d}$ from Proposition~\ref{Gagliardo-Nirenberg}.
\end{theorem}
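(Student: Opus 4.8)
The plan is to derive an energy identity by testing the two equations in \eqref{eq:weakformulation} with the natural choices $w_h = c_h$ and $v_h = q_h$, then summing. First I would rewrite $\tfrac12\tfrac{d}{dt}\|c_h\|^2$ and $\tfrac12\tfrac{d}{dt}\|q_h\|^2$ for the time-derivative terms, use coercivity of $\mathcal{A}(\cdot,\cdot)$ from Proposition~\ref{ContinuiutyCoercivityA} (valid since $\gamma_0$ is large) to bound $\mathcal{A}(c_h,c_h)+\mathcal{A}(q_h,q_h)\ge \mu(\|c_h\|_\mathrm{DG}^2+\|q_h\|_\mathrm{DG}^2)$, and use the nonnegativity of $k_1,\tilde k_1$ together with $K_1\le k_1$, $\tilde K_1\le\tilde k_1$ so that $r_L(c_h,c_h)+\tilde r_L(q_h,q_h)\ge K_1\|c_h\|^2 + \tilde K_1\|q_h\|^2$. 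The crucial observation is that the two cubic coupling contributions are $r_N(c_h,q_h,c_h) - r_N(q_h,c_h,q_h) = \int_\Omega k_{12} c_h q_h (c_h - q_h)$, which does \emph{not} vanish; it must be controlled, not cancelled. On the right-hand side, I bound $F_c(c_h)+F_q(q_h)$ by Cauchy--Schwarz and Young's inequality, absorbing $\tfrac12(\|c_h\|^2+\|q_h\|^2)$ into the reaction terms (adjusting $\widetilde\mu$ to include the factor $1$) and leaving $\tfrac12(\|f_c\|^2+\|f_q\|^2)$ on the right.

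The heart of the argument is estimating the coupling term $\left|\int_\Omega k_{12} c_h q_h(c_h-q_h)\right| \le K_{12}\big(\|c_h\|_{L^3}^2\|q_h\|_{L^3} + \|q_h\|_{L^3}^2\|c_h\|_{L^3}\big)$ after applying Hölder's inequality with exponents $(3,3,3)$ — here is where $d\le 3$ enters, since $L^3$ is the relevant Lebesgue space controlled by the discrete Gagliardo--Nirenberg inequality. I would then apply Proposition~\ref{Gagliardo-Nirenberg} with $q=3$: solving $\tfrac13 = s(\tfrac12-\tfrac1d)+\tfrac{1-s}{2}$ gives $s = d/6$, so $\|u_h\|_{L^3}\le C_{\mathrm{G}_d}\|u_h\|_\mathrm{DG}^{d/6}\|u_h\|^{1-d/6}$. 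Substituting, each product $\|u_h\|_{L^3}^2\|v_h\|_{L^3}$ becomes $C_{\mathrm{G}_d}^3\|u_h\|_\mathrm{DG}^{d/3}\|u_h\|^{2-d/3}\|v_h\|_\mathrm{DG}^{d/6}\|v_h\|^{1-d/6}$; the total DG-exponent is $d/3+d/6 = d/2$, matching the available $\mu(\|c_h\|_\mathrm{DG}^2+\|q_h\|_\mathrm{DG}^2)$ on the left precisely when we use Young's inequality with the pair of exponents $(4/d,\,4/(4-d))$ to split off the DG-norm factor. This is the main obstacle: one must carefully choose the Young splitting so that the DG-power produced is exactly $2$ (absorbable by coercivity with a small constant $\varepsilon$ tuned against $\mu$), and track the resulting power of the $L^2$-norms, which comes out to $2-d/3 + 1-d/6 = 3 - d/2$ before the Young step and, after dividing by the appropriate conjugate exponent, to $(3-d/2)\cdot\tfrac{4}{4-d}\cdot\tfrac{1}{?}$ — the bookkeeping must be organized so the final power of $\|c_h\|^2+\|q_h\|^2$ is exactly $d$, i.e. $\eta = d$ in Perov's inequality.

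After absorbing the DG-part into coercivity, I obtain a differential inequality of the form $\tfrac{d}{dt}\big(\|c_h\|^2+\|q_h\|^2\big) + \widetilde\mu\big(\|c_h\|_\mathrm{DG}^2+\|c_h\|^2+\|q_h\|_\mathrm{DG}^2+\|q_h\|^2\big) \le \|f_c\|^2+\|f_q\|^2 + \tfrac{\xi}{\widetilde\mu}\big(\|c_h\|^2+\|q_h\|^2\big)^{d}$ with $\xi = C_{\mathrm{G}_d}^3 K_{12}^2/(2^{d-2}\mu)$ emerging from the constants in the Hölder--Gagliardo--Nirenberg--Young chain (the $2^{d-2}$ being an artifact of the harmonic-average and Young constants). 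Integrating in time from $0$ to $t$, dropping the nonnegative DG/$L^2$ integral terms momentarily to isolate $y(t):=\|c_h(t)\|^2+\|q_h(t)\|^2$, gives $y(t)\le a + \tfrac{\xi}{\widetilde\mu}\int_0^t y^d(s)\,ds$ where $a = \|c_{0h}\|^2+\|q_{0h}\|^2+\int_0^T(\|f_c\|^2+\|f_q\|^2)$. Applying Proposition~\ref{Perov_inequality} with $\eta=d$, $b=\xi/\widetilde\mu$ bounds $y(t)$ for $t<\widetilde t$; then I reinsert this bound into the retained integral of $\|c_h\|_\mathrm{DG}^2+\|c_h\|^2+\cdots$ to recover the full energy norm $\|c_h\|_\varepsilon^2+\|q_h\|_\varepsilon^2$ on the left. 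A final rescaling — noting that the $\widetilde\mu^{d-1}$ and $\widetilde\mu^{-1}$ factors in \eqref{eq:StabilityEstimate} come from dividing through by $\widetilde\mu$ before applying Perov and from the $b=\xi/\widetilde\mu$ substitution inside the $(\eta-1)$-th power — yields exactly the stated bound, with $\widetilde\mu=\min\{1,\mu,K_1,\tilde K_1\}$ collecting all the left-hand coefficients.
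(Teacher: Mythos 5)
Your proposal follows essentially the same route as the paper's proof: testing with $(c_h,q_h)$, using coercivity and the lower bounds $K_1\le k_1$, $\tilde K_1\le\tilde k_1$, controlling the non-cancelling cubic coupling via H\"older in $L^3$, the discrete Gagliardo--Nirenberg inequality with $s=d/6$, and a Young splitting tuned so the DG-power is absorbed by $\mu$ and the remaining $L^2$-power gives $\eta=d$ in Perov's inequality. The only (immaterial) difference is that the paper first symmetrizes the mixed terms into $\|c_h\|_{L^3}^3+\|q_h\|_{L^3}^3$ before applying Gagliardo--Nirenberg, whereas you apply it to each factor of the mixed products directly; both yield the stated bound.
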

\begin{proof}
We sum the two equations in~\eqref{eq:weakformulation} and we introduce the notation $\dot{u}=\frac{\partial u}{\partial t}$ for each $u\in\Wh$:
\begin{equation}
\label{eq:WK_uniqueeq}
\begin{split}
\left(\dot c_h, w_h\right)_\Omega + & \left(\dot q_h, v_h\right)_\Omega + \mathcal{A}(c_h,w_h) + \mathcal{A}(q_h,v_h) + r_{L}(c_h,w_h) +\widetilde{r}_L(q_h,v_h) = \\ &  - r_{N}(c_h,q_h,w_h)  + r_{N}(q_h,c_h,v_h) + F_c(w_h) + F_q(v_h).
\end{split}
\end{equation}
By choosing as trial functions $w_h=c_h(t)$ and $v_h=q_h(t)$, we integrate in time Equation~\eqref{eq:WK_uniqueeq} and exploit Proposition~\ref{ContinuiutyCoercivityA} to get:
\begin{equation*}
\begin{split}
\frac{1}{2} \| c_h(t) & \|^2 -  \frac{1}{2}\| c_{0h} \|^2 + \frac{1}{2}  \| q_h(t) \|^2 - \frac{1}{2}\| q_{0h} \|^2 + \int_{0}^{t} \left(\mu \left(\| c_h(s) \|_\mathrm{DG}^2 + \| q_h(s) \|_\mathrm{DG}^2\right) + \left(K_1 \|c_h(s)\| + \widetilde{K}_1 \|q_h(s)\|\right)\right) \mathrm{d}s \leq \\ - & \int_{0}^{t} r_{N}(c_h(s),q_h(s),c_h(s)) \mathrm{d}s + \int_{0}^{t} r_{N}(c_h(s),q_h(s),c_h(s)) \mathrm{d}s + \int_{0}^{t} F_c(c_h(s))\mathrm{d}s + \int_{0}^{t} F_q(q_h(s))\mathrm{d}s.
\end{split}
\end{equation*}
On the other hand, thanks to coefficients regularity (\ref{CoefficientsRegularity}) and H\"older's and Young's inequalities, we obtain:
\begin{equation*}
\begin{split}
\| c_h(t) \|^2 - & \| c_{0h} \|^2 + \| q_h(t) \|^2 - \| q_{0h} \|^2 + \int_{0}^{t} \left(2\mu \left(\| c_h(s) \|_\mathrm{DG}^2 + \| q_h(s) \|_\mathrm{DG}^2\right) + \left(K_1 \|c_h(s)\| + \widetilde{K}_1 \|q_h(s)\|\right)\right) \mathrm{d}s \leq \\  + &  \frac{1}{K_1}\int_{0}^{t}\| f_c(s) \|^2\mathrm{d}s + \frac{1}{\widetilde{K_1}}\int_{0}^{t}\| f_q(s) \|^2\mathrm{d}s + \int_0^t  2|r_{N}(c_h(s),q_h(s),c_h(s)) + r_{N}(q_h(s),c_h(s),q_h(s))| \mathrm{d}s,
\end{split}
\end{equation*}
where $K_1=\|k_1\|_{L^{\infty}(\Omega)}$ and $\widetilde{K}_1=\|\tilde{k}_1\|_{L^{\infty}(\Omega)}$. Concerning the non-linear terms we apply first the H\"older's inequality to obtain:
\begin{equation*}
    \int_{0}^{t} | r_N(c_h(s),q_h(s),c_h(s)) + r_N(q_h(s),c_h(s),q_h(s))| \mathrm{d}s \leq K_{12} \int_{0}^{t} \left(\int_{\Omega} | c_h^2(s)q_h(s) | + | q_h^2(s)c_h(s) |\right)\mathrm{d}s,
\end{equation*}
where $K_{12}=\| k_{12} \|_{L^{\infty}(\Omega)}$. Then we employ H\"older's inequality  with $\gamma = 3/2 $ and $\gamma' = 3$ on each term and get:
\begin{equation*}
   \int_{\Omega}\left(| c_h^2 q_h | + | c_h q^2_h |\right)
    \le \left(\int_{\Omega} c_h^3 \right)^{2/3}\left(\int_{\Omega}  q_h^3\right)^{1/3} + \left(\int_{\Omega} c_h^3 \right)^{1/3}\left(\int_{\Omega}  q_h^3\right)^{2/3}
    \le \| c_h \|_{L^3}^2 \|  q_h \|_{L^3} +  \| c_h \|_{L^3} \|  q_h \|^2_{L^3}.
\end{equation*}
Subsequently we apply the Young's inequality with exponents $\alpha = 3/2 $ and $\beta = 3$ and then sum up the two contributions:
\begin{equation*}
    \int_{0}^{t} | r_N(c_h(s),q_h(s),c_h(s)) + r_N(q_h(s),c_h(s),q_h(s))| \mathrm{d}s \leq K_{12} \int_{0}^{t} \left( \| c_h(s) \| _{L^3}^3 + \| q_h(s) \| _{L^3}^3\right) \mathrm{d}s.
\end{equation*}
In order to exploit the Gagliardo-Nirenberg inequality in Proposition~\ref{Gagliardo-Nirenberg}, we distinguish 2 cases:
\begin{itemize}
    \item $d=2$, then $s=1/3$ and the Young's inequality can be applied with exponents $\alpha =\beta = 2$ and with an $\epsilon>0$:
    \begin{equation*}
        \| c_h \|_{L^3}^3 \le \frac{C_{\mathrm{G}_d}^3}{2}\left(\epsilon \| c_h \|_\mathrm{DG}^2+\frac{1}{\epsilon}\| c_h \|^4\right);
    \end{equation*}
    \item $d=3$, then $s=1/2$ and Young's inequality can be applied with exponents  $\alpha = 4/3$, $\beta = 4$ and with an $\epsilon>0$:
    \begin{equation*}
         \| c_h \|_{L^3}^3 \le \frac{C_{\mathrm{G}_d}^3}{4}\left(3\epsilon \| c_h \|_\mathrm{DG}^2+\frac{1}{\epsilon}\| c_h \|^6\right).
    \end{equation*}
\end{itemize}
Then by choosing $\epsilon = 2\mu/(d K_{12} C^3_{\mathrm{G}_d})$, we obtain:
\begin{equation}
\begin{split}
  \tilde{\mu} \Big( \| c_h(t) \|^2 + & \| q_h(t) \|^2 + \int_{0}^{t} \left(\| c_h(s) \|_{\mathrm{DG}}^2+ \| q_h(s) \|_{\mathrm{DG}}^2 \| c_h(s) \|^2 + \| q_h(s) \|^2 \right) \mathrm{d}s \Big) \le \| c_{0h} \|^2 + \| q_{0h} \|^2 \\ + &
\int_{0}^{T} \left(\| f_c(s) \|^2+ \| f_q(s) \|^2\right) \mathrm{d}s + \frac{d K^2_{12} C_{\mathrm{G}_d}^6 }{2^{d-3} \mu}\int_{0}^{t}\left(\| c_h(s) \|^{2d} + \| q_h(s) \|^{2d}\right) \mathrm{d}s.
\end{split}
\end{equation}
Finally, we can define a positive constant $\widetilde{\mu} = \min\{1,2\mu,K_1,\tilde{K}_1\}$ and by applying Perov's inequality in Proposition~\ref{Perov_inequality}, we deduce the thesis.
\end{proof}
\begin{remark}
Under the assumptions of Theorem~\ref{StabilityEstimate}, if $f_c=f_q=0$, estimate (\ref{eq:StabilityEstimate}) reduces to:
\begin{equation}
    \label{StabilityEstimate_HomofeneousF}
        \| c_h \|_{\varepsilon}^2+ \| q_h \|_{\varepsilon}^2 \leq \dfrac{\left(\| c_{0h} \|^2+ \| q_{0h} \|^2 + T \| k_0 \|^2 \right)}{\left(\widetilde{\mu}^{d-1}- \widetilde{\mu}^{-1}\xi t\left(\| c_{0h} \|^2+ \| q_{0h} \|^2 + T \| k_0 \|^2\right)^{d-1}\right)^{\frac{1}{d-1}}}=C_S
\end{equation}
\end{remark}
\begin{remark}
Comparing the estimate of Theorem~\ref{StabilityEstimate} with the one derived for FK model in \cite{corti_discontinuous_2023}, we can observe a significant improvement, due to the dependence on time, which is no more exponential. This fact ensure stability for much longer time $T$. 
\end{remark}
\subsection{Error analysis of the semidiscrete formulation}
\begin{theorem}
\label{APrioriErrorEstimate}
Let problem~\eqref{eq:HM_governingequation} with $f_c=f_q=0$ and $c_\mathrm{D}=q_\mathrm{D}=0$. Let $(c(t),q(t))$ be the solution of Problem~\eqref{eq:HM_governingequation} for any $t\in(0,T]$ and assume that $c,q \in C^1([0,T],H^n(\Omega))$ with $\;n\geq 2$. Under Assumptions~\ref{CoefficientsRegularity} and~\ref{Mesh_quality_assumption}, let $(c_h(t),q_h(t))$ be the solution of~\eqref{eq:weakformulation}, with a sufficiently large penalty parameter $\gamma_0$. Then, the following estimate holds:  
\begin{equation*}
\begin{split}
  \tnorm c(t)-c_h(t)\tnorm_{\varepsilon}^2 + \tnorm q(t)-q_h(t)\tnorm_{\varepsilon}^2 \lesssim & \sum_{K\in\partition} h_K^{2\min(p+1,n)-2} \Big( \| c(t) \|_{H^n(K)}^2 +\| q(t) \|_{H^n(K)}^2 \\
  + & \int_{0}^{t}\left(\| c(s) \|_{H^n(K)}^2 + \| \dot c(s) \|_{H^n(K)}^2 +\| q(s) \|_{H^n(K)}^2 + \|\dot q(s) \|_{H^n(K)}^2\right)\mathrm{d}s \Big).
\end{split}
\end{equation*}
\end{theorem}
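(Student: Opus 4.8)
The plan is to follow the classical strategy for a-priori error estimates in parabolic DG methods: split the error via an auxiliary projection (here the elliptic-type projection associated with $\mathcal{A}$, or simply the $L^2$-orthogonal projection onto $\Wh$ together with the best-approximation estimates of $\mathcal{A}$), write $c-c_h = (c-\mathcal{R}c) + (\mathcal{R}c - c_h) =: \eta_c + e_c$ and likewise for $q$, and control the interpolation parts $\eta_c,\eta_q$ by the polynomial approximation properties on polytopic meshes (which give the $h_K^{2\min(p+1,n)-2}$ rate together with the $H^n$-seminorms appearing on the right-hand side, including the time derivatives through the $C^1([0,T],H^n)$ assumption), while the discrete parts $e_c,e_q$ are estimated by testing the error equation against $e_c$ and $e_q$.

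The core computation: subtract the semidiscrete scheme \eqref{eq:weakformulation} from the weak form of \eqref{eq:HM_governingequation} to obtain an error equation, then pick $w_h=e_c(t)$, $v_h=e_q(t)$ and integrate in time. The linear terms are handled exactly as in the stability proof (Theorem~\ref{StabilityEstimate}): coercivity of $\mathcal{A}$ (Proposition~\ref{ContinuiutyCoercivityA}) produces $\mu(\|e_c\|_\mathrm{DG}^2+\|e_q\|_\mathrm{DG}^2)$, the reaction terms $r_L,\widetilde r_L$ give nonnegative contributions, and the continuity bound on $\mathcal{A}$ together with Hölder/Young absorbs the $\eta$-terms into the interpolation error and a small multiple of $\|e_c\|_\mathrm{DG}^2,\|e_q\|_\mathrm{DG}^2$. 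The genuinely new work is the quadratic coupling $r_N$: one must expand $r_N(c,q,c)-r_N(c_h,q_h,e_c)$ (and its companion) into products that are at most cubic in $\eta_c,\eta_q,e_c,e_q$ and in the exact solutions $c,q$, then estimate each monomial. Terms that are linear in $(e_c,e_q)$ are bounded by Cauchy--Schwarz using $L^\infty$ bounds on $c,q$ (available from $H^n(\Omega)\hookrightarrow L^\infty$ since $n\ge 2$) and the interpolation estimates; the quadratic-in-error terms — e.g. $\int_\Omega k_{12}\, e_c\, e_q\, (\text{exact factor})$ and the truly cubic $\int_\Omega k_{12}\, e_c^2\, e_q$ — are controlled exactly as in Theorem~\ref{StabilityEstimate} via the discrete Gagliardo--Nirenberg inequality (Proposition~\ref{Gagliardo-Nirenberg}): pass to $L^3$ norms, apply Gagliardo--Nirenberg to trade a factor $\|e\|_\mathrm{DG}^s$ against $\|e\|^{1-s}$, and use Young's inequality with the $d$-dependent exponents to absorb the $\|e\|_\mathrm{DG}^2$ part into the coercivity term while leaving a higher power $\|e\|^{2d}$ (or $\|e\|^2$ after bounding by the already-controlled energy) on the right.

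After collecting everything, one arrives at an integral inequality of the form
\begin{equation*}
  \Phi(t) \lesssim \mathcal{I}(t) + \int_0^t \big(C_1\,\Phi(s) + C_2\,\Phi^{d}(s)\big)\,\mathrm{d}s,
\end{equation*}
where $\Phi(t)=\|e_c(t)\|^2+\|e_q(t)\|^2+\int_0^t(\|e_c\|_\mathrm{DG}^2+\|e_q\|_\mathrm{DG}^2+\|e_c\|^2+\|e_q\|^2)$ and $\mathcal{I}(t)$ is the interpolation-error quantity appearing on the right-hand side of the statement. The linear Grönwall part and the superlinear Perov part (Proposition~\ref{Perov_inequality}) together close the estimate — though here, crucially, the interpolation error $\mathcal{I}(t)\to 0$ as $h\to 0$, so for $h$ small enough the Perov time bound \eqref{eq:timebound} is satisfied on all of $(0,T]$ and the nonlinear blow-up term stays harmless; this is exactly how the paper removes the restrictive parameter assumptions of \cite{corti_discontinuous_2023}. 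Finally, $\tnorm c-c_h\tnorm_\varepsilon \le \tnorm \eta_c\tnorm_\varepsilon + \tnorm e_c\tnorm_\varepsilon$ and the triple-norm of $\eta_c$ is bounded by the same interpolation estimates (this needs the full $\tnorm\cdot\tnorm_\mathrm{DG}$ including the $\gamma_F^{-1/2}\averagel\mathbf{D}\nabla_h\cdot\averager$ term, which is why $n\ge 2$ is required), giving the claimed bound.

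The main obstacle I expect is the bookkeeping and sign-control of the cubic nonlinear terms: one must be careful that the partial antisymmetry of the coupling ($+r_N$ in the $c$-equation, $-r_N$ in the $q$-equation) which kills the leading quadratic term in the stability proof is exploited consistently after inserting $c_h=\mathcal{R}c-e_c$, so that no uncontrolled term linear in $(e_c,e_q)$ but multiplied by a non-small quantity survives — and that every remaining error-quadratic or error-cubic term is of a shape to which Gagliardo--Nirenberg applies with an $\varepsilon$ small enough to be absorbed by $\mu$. The rest is routine once the stability-proof machinery is in place.
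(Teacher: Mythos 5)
Your proposal is sound in its overall architecture (interpolant splitting, testing with the discrete error, coercivity/continuity for the linear terms, triangle inequality plus polytopic approximation estimates at the end), but it diverges from the paper at the one genuinely delicate point: the nonlinear coupling. You expand the difference of the quadratic terms in a way that produces a \emph{truly cubic} monomial $\int_\Omega k_{12}\,e_c^2 e_q$, and you then need Gagliardo--Nirenberg to leave a superlinear term $\Phi^d$ in the integral inequality, Perov's inequality to close it, and a smallness-of-$h$ argument to guarantee that the Perov blow-up time \eqref{eq:timebound} exceeds $T$. The paper avoids all of this by the telescoping $cq-c_hq_h = q\,(c-c_\mathrm{I}) + c_\mathrm{I}(q-q_h) + q_h(c_\mathrm{I}-c_h)$, which guarantees that every resulting term retains one factor that is a priori bounded: the exact solution $q\in L^\infty$, the interpolant $c_\mathrm{I}$, or the discrete solution $q_h$, the last being controlled in $L^2$ by the stability estimate of Theorem~\ref{StabilityEstimate}. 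The only term needing Gagliardo--Nirenberg is then $(q_h e_h^c,e_h^c)_\Omega \lesssim \|e_h^c\|_{L^4}^2$, which after Young's inequality contributes $\epsilon\|e_h^c\|_\mathrm{DG}^2+\epsilon^{-1}\|e_h^c\|^2$ --- i.e.\ only terms \emph{linear} in the squared error norms --- so a standard (linear) Gr\"onwall lemma suffices and Perov is never invoked in the error analysis. This is precisely what buys the remark after Theorem~\ref{APrioriErrorEstimate}: no restriction on the model coefficients, no restriction on $h$, and validity on the whole interval on which the stability bound holds. Your route can be made to work, but it is strictly more fragile: the mixed inequality $\Phi\lesssim\mathcal I+\int(C_1\Phi+C_2\Phi^d)$ is not directly covered by Proposition~\ref{Perov_inequality} as stated (which handles a pure power), so you would need an additional bootstrap/continuation argument, and the final estimate would hold only for $h$ below a threshold depending on $T$ and the data. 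One further small point: the ``partial antisymmetry'' of the coupling that you propose to exploit is not actually used in either the stability or the error proof --- both nonlinear contributions are bounded in absolute value separately --- so you need not worry about preserving any cancellation.
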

\begin{proof}
Let us consider the weak solution of the heterodimer model ($c$, $q$), then due to the consistency of the DG formulation, it holds:
\begin{equation}
  \label{eq:DG_exact}
  \left(\dot c, w_h\right)_{\Omega}+\left(\dot q, v_h\right)_{\Omega} + \mathcal{A}(c,w_h) + \mathcal{A}(q,v_h) + r_{L}(c,w_h) + \widetilde{r}_L(q,v_h) = - r_N(c,q,w_h) + r_N(q,c,v_h) + (k_0,w_h)_\Omega.
\end{equation}
We subtract Equations~\eqref{eq:WK_uniqueeq} and~\eqref{eq:DG_exact} and consider the interpolant of the exact solutions $c_{\mathrm{I}},q_{\mathrm{I}}\in\Wh$ \cite{babuska-interpolant}, respectively, then rewrite the errors as:
\begin{equation*}
    c-c_h = (c-c_\mathrm{I})+(c_\mathrm{I}-c_h) = e_{\mathrm{I}}^c+e_h^c, \qquad q-q_h = (q-q_\mathrm{I})+(q_\mathrm{I}-q_h) = e_{\mathrm{I}}^q+e_h^q. 
\end{equation*}
We test the obtained formulation with respect to $w_h=e_h^c$ and $v_h=e_h^q$ and integrate along the time interval $[0,t]$:

\begin{equation*}
\begin{split}
    \int_{0}^{t} & \left((\dot e_h^c(s), e_h^c(s))_\Omega + (\dot e_h^q(s), e_h^q(s))_\Omega + \mathcal{A}(e_h^c(s),e_h^c(s)) + \mathcal{A}(e_h^q(s),e_h^q(s))  +r_L(e_h^c(s),e_h^c(s)) + \widetilde{r}_L(e_h^q(s),e_h^q(s)) 
 \right)\mathrm{d}s =  \\
  \int_{0}^{t} & \Big( - r_N(c(s),q(s),e_h^c(s)) + r_N(c_h(s),q_h(s),e_h^c(s)) + r_N(q(s),c(s),e_h^q(s)) - r_N(q_h(s),c_h(s),e_h^q(s)) \\
  - & (\dot e_{\mathrm{I}}^c(s), e_h^c(s))_\Omega - (\dot e_{\mathrm{I}}^q(s), e_h^q(s))_\Omega - \mathcal{A}(e_{\mathrm{I}}^c(s),e_h^c(s)) - \mathcal{A}(e_{\mathrm{I}}^q(s),e_h^q(s)) - r_L(e_{\mathrm{I}}^c(s),e_h^c(s)) 
  - \widetilde{r}_L(e_{\mathrm{I}}^q(s),e_h^q(s))\Big)\mathrm{d}s.
\end{split}
\end{equation*}
We proceed by exploiting the coercivity and continuity of the bilinear form $\mathcal{A}$ in Proposition~\ref{ContinuiutyCoercivityA}, and by means of H\"older's and Young's inequalities:
\begin{equation*}
\begin{split}
     \|e_h^c(t)\|^2 + & \|e_h^q(t)\|^2 +\int_{0}^{t} \left(\|e_h^c(s)\|_\mathrm{DG}^2 + \|e_h^q(s)\|_\mathrm{DG}^2+ \|e_h^c(s)\|^2 + \|e_h^q(s)\|^2\right)\mathrm{d}s \lesssim \\ 
     + & \int_{0}^{t} \left(|\left(c(s)q(s)-c_h(s)q_h(s),e_h^c(s)\right)_\Omega | + |\left(c(s)q(s)-c_h(s)q_h(s),e_h^q(s)\right)_\Omega |\right)\mathrm{d}s \\
     + & \int_{0}^{t} \left(\left(\|\dot e_{\mathrm{I}}^c(s) \| + \| e_{\mathrm{I}}^c(s) \|\right)\|e_h^c(s) \| + \left(\|\dot e_{\mathrm{I}}^q(s) \| + \| e_{\mathrm{I}}^q(s) \|\right)\|e_h^q(s) \| \right)  \mathrm{d}s \\
    + & \int_{0}^{t} \left( \tnorm e_{\mathrm{I}}^c(s)\tnorm_\mathrm{DG}\|e_h^c(s)\|_\mathrm{DG} + \tnorm e_{\mathrm{I}}^q(s)\tnorm_\mathrm{DG}\|e_h^q(s)\|_\mathrm{DG}\right) \mathrm{d}s.
     \end{split}
\end{equation*}
Concerning the nonlinear term, we observe that:
\begin{equation*}
    \begin{split}
    cq-c_hq_h = cq -qc_{\mathrm{I}} + qc_{\mathrm{I}} -q_hc_{\mathrm{I}} +q_hc_{\mathrm{I}}-q_hc_h  \\
    =q\underbrace{(c-c_{\mathrm{I}})}_{e_{\mathrm{I}}^c} + c_{\mathrm{I}}\underbrace{(q-q_h)}_{e_{\mathrm{I}}^q+e_h^q}+ q_h\underbrace{(c_{\mathrm{I}}-c_h)}_{e_{h}^c}.
    \end{split}
\end{equation*}
Therefore:
\begin{equation*}
    |\left(cq-c_hq_h,e_h^c\right)_\Omega|= \underbrace{|\left(qe_{\mathrm{I}}^c,e_h^c\right)_\Omega|}_{(\mathrm{I})} + \underbrace{|\left(c_{\mathrm{I}}e_{\mathrm{I}}^q,e_h^c\right)_\Omega|}_{(\mathrm{II})} + \underbrace{|\left(c_{\mathrm{I}}e_{h}^q,e_h^c\right)_\Omega|}_{(\mathrm{III})} + \underbrace{|\left(q_he_{h}^c,e_h^c\right)_\Omega|}_{(\mathrm{IV})}.
\end{equation*}
The terms on the right-hand side above can be treated separately as follows:

\begin{enumerate}
     \item[(I)] Using that the exact solution $q(t)\in L^{\infty}(\Omega)$ for any $t>0$, H\"older's and Young's inequalities to obtain:
     \begin{equation*}
     |\left(q\, e_{\mathrm{I}}^c,e_h^c\right)_\Omega| \lesssim \|e_{\mathrm{I}}^c \| \| e_h^c \| ;
     \end{equation*}
     \item[(II)] Using that the interpolant is bounded by construction for any $t>0$, H\"older's and Young's inequalities lead to:
     \begin{equation*}
    |\left(c_\mathrm{I}\,e_{\mathrm{I}}^q,e_h^c\right)_\Omega| \lesssim  \|e_{\mathrm{I}}^q \| \| e_h^c \|;
     \end{equation*}
     \item[(III)] We exploit the interpolant boundedness, H\"older's, and Young's inequalities to achieve:
     \begin{equation*}
          |\left(c_\mathrm{I}\,e_h^q,e_h^c\right)_\Omega| \lesssim \|e_h^q \|^2+ \| e_h^c \|^2;
     \end{equation*}
     \item[(IV)] We exploit the stability estimate \eqref{eq:StabilityEstimate}, discrete Gagliardo-Nirenberg inequality \cite{nirenbergdiscrete}, and Young inequality with $\epsilon>0$ small enough to obtain:
    \begin{equation*}
         |\left(q_h\,e_h^c,e_h^c\right)_\Omega| \lesssim \|e_h^c \|^2_{L^4(\Omega)} \lesssim \epsilon \|e_h^c \|^2_\mathrm{DG} + \dfrac{1}{\epsilon}\|e_h^c \|^2;
     \end{equation*}
     Notice that the Gagliardo-Nirenberg inequality applies in $d=2$ with $s=1/2$ and then employs Young inequality with exponents $\alpha=\beta=2$. Concerning $d=3$, we apply the Gagliardo-Nirenberg inequality with $s=3/4$ and employ Young inequality with exponents $\alpha=4/3$ and $\beta=4$.   
\end{enumerate}
Analogous considerations can be done to treat the second nonlinear term. Then we obtain:
\begin{equation*}
\begin{split}
     \tnorm e_h^c(t)\tnorm^2_\varepsilon & + \tnorm e_h^q(t)\tnorm^2_\varepsilon \lesssim \int_{0}^{t} \left(\|e_h^c(t)\|^2 + \|e_h^q(t)\|^2 \right)\mathrm{d}s\\ 
     + & \int_{0}^{t}  \left(\|\dot e_{\mathrm{I}}^c(s) \|^2 + \| e_{\mathrm{I}}^c(s) \|^2 + \tnorm e_{\mathrm{I}}^c(s)\tnorm_\mathrm{DG}^2 + \| \dot e_{\mathrm{I}}^q(s) \|^2 + \| e_{\mathrm{I}}^q(s) \|^2 + \tnorm e_{\mathrm{I}}^c(s)\tnorm_\mathrm{DG}^2 \right) \mathrm{d}s.
     \end{split}
\end{equation*}  
By application of H\"older's inequality and  Gr\"onwall's lemma \cite{quarteroni:EDP} we get: 
\begin{equation*}
     \tnorm e_h^c(t)\tnorm^2_\varepsilon + \tnorm e_h^q(t)\tnorm^2_\varepsilon \lesssim e^t \int_{0}^{t} \left(\|\dot e_{\mathrm{I}}^c(s) \|^2 + \| e_{\mathrm{I}}^c(s) \|^2 + \tnorm e_{\mathrm{I}}^c(s)\tnorm_\mathrm{DG}^2 + \| \dot e_{\mathrm{I}}^q(s) \|^2 + \| e_{\mathrm{I}}^q(s) \|^2 + \tnorm e_{\mathrm{I}}^c(s)\tnorm_\mathrm{DG}^2 \right) \mathrm{d}s.
\end{equation*}
We can bound each term with the interpolation error estimate \cite{riviereDiscontinuousGalerkinMethods2008}:
\begin{equation*}
\tnorm e_h^c\tnorm_{\varepsilon}^2 + \tnorm e_h^q \tnorm_{\varepsilon}^2 \lesssim \sum_{K\in\partition} h_L^{2\min(p+1,n)-2} \int_{0}^{t}(\|  c(s) \|_{H^n(\mathrm{K})}^2 + \|  \dot c(s) \|_{H^n(\mathrm{K})}^2 +\| q(s) \|_{H^n(\mathrm{K})}^2 + \|\dot q(s) \|_{H^n(\mathrm{K})}^2).
\end{equation*}
Finally, by application of the triangular inequality:
\begin{equation*}
   \tnorm c-c_h\tnorm_{\varepsilon}^2+\tnorm q-q_h\tnorm_{\varepsilon}^2 \le \tnorm e_h^c\tnorm_{\varepsilon}^2+\tnorm e_{\mathrm{I}}^c\tnorm_{\varepsilon}^2+ \tnorm e_h^q\tnorm_{\varepsilon}^2+\tnorm e_{\mathrm{I}}^q\tnorm_{\varepsilon}^2,
\end{equation*}
and taking into account the interpolation error estimate, the thesis follows.
\end{proof}
\begin{remark}
Comparing the estimate of Theorem~\ref{APrioriErrorEstimate} with the one derived for FK model in \cite{corti_discontinuous_2023}, we can observe a significant improvement, due to the absence of relations between the model coefficients to be satisfied. 
\end{remark}
\section{Fully discrete formulation}
\label{TimeDiscret}
In this section, we present the fully discrete formulation of Problem~\eqref{eq:weakformulation} in its algebraic form. We consider $\{\phi_j\}_{j=0}^{N_h}$ a set of basis function for $\Wh$, being $N_h$ its dimension. We can write the discrete solutions expansion with respect to the chosen base, i.e. $c_h(\boldsymbol{x},t) = \sum_{j=0}^{N_h} \mathbf{C}_j(t) \phi_j(\boldsymbol{x})$ and $q_h(\boldsymbol{x},t) = \sum_{j=0}^{N_h} \mathbf{Q}_j(t) \phi_j(\boldsymbol{x})$. We denote by $\mathbf{C}$, $\mathbf{Q}\in\mathbb{R}^{N_h}$, the corresponding expansion coefficients vectors. We also define the following matrices

\begin{align*}
     [\mathrm{M}]_{i,j}                 & = (\phi_j,\phi_i)_\Omega          & \mathrm{Mass\;matrix} \\
     [\mathrm{A}]_{i,j}                 & = \mathcal{A}(\phi_j,\phi_i)      & \mathrm{Stiffness\;matrix} \\
     [\mathrm{R}_N(\Phi(t))]_{i,j}      & = r_N(\Phi_h(t),\phi_j,\phi_i)    & \mathrm{Non-linear\;reaction\;matrix} \\
     [\mathrm{R}_L]_{i,j}               & = r_L(\phi_j,\phi_i)              & \mathrm{Linear\;reaction\;matrix} \\
     [\widetilde{\mathrm{R}}_L]_{i,j}   & = \widetilde{r}_L(\phi_j,\phi_i)  & \mathrm{Linear\;reaction\;matrix}
\end{align*}
with $i,j = 1,...,N_h$. Moreover, we introduce the right-hand side vectors : $[\mathbf{F}_c]_i= F_c(\phi_i)$ and $[\mathbf{F}_q]_i= F_q(\phi_i)$ with $i = 1,...,N_h$. We then rewrite the problem~\eqref{eq:weakformulation} in algebraic form: 
\begin{equation}
\label{eq:AlgebraiFormulation}
\begin{cases}
  \mathrm{M} \dot{\mathbf{C}}(t) + \mathrm{A}\mathbf{C}(t) + \mathrm{R}_{L}\mathbf{C}(t) + \mathrm{R}_{N}(\mathbf{C}(t))\mathbf{Q}(t)  = \mathbf{F}_c(t) &  t\in (0,T] \\[6pt]
 \mathrm{M}\dot{\mathbf{Q}}(t) + \mathrm{A}\mathbf{Q}(t) + \widetilde{\mathrm{R}}_{L}\mathbf{Q}(t) - \mathrm{R}_{N}(\mathbf{Q}(t))\mathbf{C}(t) = \mathbf{F}_q(t)  & t\in (0,T] \\[6pt]
 \mathbf{C}(0) = \mathbf{C}_0 \textit{,} \hspace{0.2cm} \mathbf{Q}(0) =\mathbf{Q}_0.
 \end{cases}
\end{equation} 
\par
We construct a partition $0 < t_1 < t_2 < ... < t_{N_T} = T$ of the time interval $[0,T]$ into $N_T$ intervals of constant time step $\Delta t = t_{n+1}-t_n$. We denote with $\mathbf{C}^n \simeq \mathbf{C}(t_n)$. We assemble the fully discrete approximation of Problem~\eqref{eq:AlgebraiFormulation} by means of the $\vartheta$-method, depending upon the user-set parameter $\vartheta \in [0,1]$.
\par
Given the initial conditions vectors $\mathbf{C}^0=\mathbf{C}(0)$ and $\mathbf{Q}^0=\mathbf{Q}(0)$, for $n= 1, ... , N_T$ find $(\mathbf{C}^{n}, \mathbf{Q}^{n})$ solution of the following system of equations:
\begin{equation}
    \label{eq:FullyDiscreteFormulation}
    \begin{dcases}
    \left(\frac{\mathrm{M}}{\Delta t} + \vartheta \mathrm{A} + \vartheta\mathrm{R}_{L}\right)\mathbf{C}^{n} +\mathrm{R}_{N}(\mathbf{C}^{*}) \mathbf{Q}^{n,n-1} = \mathbf{F}_{c}^{n}, \\  
    \left(\frac{\mathrm{M}}{\Delta t} + \vartheta \mathrm{A} + \vartheta\widetilde{\mathrm{R}}_{L}\right)\mathbf{Q}^{n} -\mathrm{R}_{N}(\mathbf{Q}^{*}) \mathbf{C}^{n,n-1} = \mathbf{F}_{q}^{n}, 
    \end{dcases}
\end{equation}
where: 
\begin{align*}
    \mathbf{F}_c^{n} = \vartheta \mathbf{F}_c(t_{n}) + (1-\vartheta)\mathbf{F}_c(t_{n-1}) + \frac{M}{\Delta t}\mathbf{C}^{n-1}-(1-\vartheta)(\mathrm{A}+\mathrm{R}_{L})\mathbf{C}^{n-1}, \\ 
    \mathbf{F}_q^{n} = \vartheta \mathbf{F}_q(t_{n}) + (1-\vartheta)\mathbf{F}_q(t_{n-1}) +  \frac{M}{\Delta t}\mathbf{Q}^{n-1}-(1-\vartheta)(\mathrm{A}+\widetilde{\mathrm{R}}_{L})\mathbf{Q}^{n-1}.
\end{align*}
We denoted with $\mathbf{C}^{*}$ and $\mathbf{Q}^{*}$ the approximation of $\mathbf{C}^{n}$ and $\mathbf{Q}^{n}$, chosen to linearize the system~\eqref{eq:FullyDiscreteFormulation}. In practice we will consider the following options:
\begin{itemize}
    \item Implicit Euler method $(\vartheta=1)$: 
    \begin{equation*}
        \mathrm{R}_{N}(\mathbf{Q}^{*}) \mathbf{C}^{n,n-1} = \mathrm{R}_{N}\left( \mathbf{Q}^{n-1}\right)\,\mathbf{C}^n;
    \end{equation*}
    \item Crank-Nicolson method $(\vartheta=1/2)$: 
    \begin{equation*}
        \mathrm{R}_{N}(\mathbf{Q}^{*}) \mathbf{C}^{n,n-1} = \mathrm{R}_{N} \left(\frac{3}{2}\mathbf{Q}^{n-1}-\frac{1}{2}\mathbf{Q}^{n-2}\right)\,\dfrac{1}{2}\left(\mathbf{C}^n+\mathbf{C}^{n-1}\right).
    \end{equation*}
\end{itemize}
\section{Numerical results: verification}
\label{Verification}
In this section, we present some numerical tests in order to assess the accuracy of the method, validate the error estimate proved in Section~\ref{Analysis}, and analyze the travelling-wave solution behavior. The numerical simulations are based on the Lymph library \cite{lymphpaper}, implementing the PolyDG method. Moreover, for all the subsequent tests, we set the penalty parameter $\gamma_0=10$.

\subsection{Test 1: convergence analysis}
We consider the domain $\Omega=(0,1)^2$ discretized into a polygonal mesh generated by PolyMesher \cite{Polymesher}. In the current phase of the study, we are focused on examining isotropic diffusion, and for the model parameters we adopt the values summarized in Table~\ref{tab:test1_param}.
\begin{table}[t]
	\centering
	\begin{tabular}{|c|r l|c|r l|}
	\hline
	\textbf{Parameter} & \multicolumn{2}{c|}{\textbf{Value}} &	\textbf{Parameter} & \multicolumn{2}{c|}{\textbf{Value}} \\ 
		\hline 
		 $d_\mathrm{ext}$     &  $1.00$       & $[\mathrm{mm^2/years}]$ 
		& $d_\mathrm{axn}$     &  $0.00$       & $[\mathrm{mm^2/years}]$ \\ 
		\hline 
		 $k_0$                 &  $0.00$       & $[\mathrm{1/years}]$ 
		& $k_{12}$              &  $1.00$       & $[\mathrm{1/years}]$ \\ 
		\hline 
		 $k_1$                 &  $1.00$       & $[\mathrm{1/years}]$ 
		& $\widetilde{k}_1$     &  $1.00$       & $[\mathrm{1/years}]$ \\ 
		\hline 
	\end{tabular}
	\caption{Physical parameter values used in Test 1 and Test 2.}
	\label{tab:test1_param}
\end{table}
\begin{figure}[t!]
    \begin{subfigure}[b]{0.5\textwidth}
            \resizebox{\textwidth}{!}{\definecolor{mycolor2}{rgb}{0.00000,1.00000,1.00000}%
\pgfplotsset{
  log x ticks with fixed point/.style={
      xticklabel={
        \pgfkeys{/pgf/fpu=true}
        \pgfmathparse{exp(\tick)}%
        \pgfmathprintnumber[fixed  zerofill, precision=2]{\pgfmathresult}
        \pgfkeys{/pgf/fpu=false}
      }
  }
}
\begin{tikzpicture}

\begin{axis}[%
width=3.875in,
height=2.36in,
at={(2.6in,1.099in)},
scale only axis,
xmode=log,
xmin=0.06,
xmax=0.32,
xminorticks=true,
xlabel = {$h$ [-]},
ylabel = {$||c(T)-c_h(T)||_\varepsilon$},
ymode=log,
ymin=1e-8,
ymax=0.01,
yminorticks=true,
axis background/.style={fill=white},
title style={font=\bfseries},
xmajorgrids,
xminorgrids,
ymajorgrids,
yminorgrids,
legend style={legend cell align=left, align=left, draw=white!15!black}
]
              
\addplot [color=red, line width=2.0pt]
  table[row sep=crcr]{%
    0.322736007494350  0.010095748022434\\
    0.181290869729279  0.004494085281393\\
    0.102590411249172  0.002088006049963\\
    0.056694075094424  9.60265754784e-04\\
};
\addlegendentry{$p=1$}

\addplot [color=orange, line width=2.0pt]
  table[row sep=crcr]{%
    0.322736007494350   0.004130645749828\\
    0.181290869729279	0.001180033545603\\
    0.102590411249172	3.83193972823e-04\\
    0.056694075094424	1.14835808279e-04\\
};
\addlegendentry{$p=2$}

\addplot [color=green, line width=2.0pt] 
  table[row sep=crcr]{%
    0.322736007494350	2.203364419614502e-04\\
    0.181290869729279   3.901439786658697e-05\\
    0.102590411249172	8.553707634901362e-06\\
    0.056694075094424   1.382446439947711e-06\\
};
\addlegendentry{$p=3$}

\addplot [color=mycolor2, line width=2.0pt]
  table[row sep=crcr]{%
    0.322736007494350   1.6047757709337e-05\\
    0.181290869729279	1.4637045837642e-06\\
    0.102590411249172	1.5446708621610e-07\\
    0.056694075094424	2.7219897900500e-08\\
};
\addlegendentry{$p=4$}

\node[right, align=left, text=black, font=\footnotesize]
at (axis cs:0.1005,0.001) {$1$};

\addplot [color=black, line width=1.5pt]
  table[row sep=crcr]{%
0.100   0.0012\\
0.075   0.0009\\
0.100   0.0009\\
0.100   0.0012\\
};

\node[right, align=left, text=black, font=\footnotesize]
at (axis cs:0.1005,0.00016) {$2$};

\addplot [color=black, line width=1.5pt]
  table[row sep=crcr]{%
0.100   0.0002\\
0.075   0.0001125\\
0.100   0.0001125\\
0.100   0.0002\\
};

\node[right, align=left, text=black, font=\footnotesize]
at (axis cs:0.1005,3.5e-6) {$3$};

\addplot [color=black, line width=1.5pt]
  table[row sep=crcr]{%
0.100   5.00e-06\\
0.075   2.11e-06\\
0.100   2.11e-06\\
0.100   5.00e-06\\
};

\node[right, align=left, text=black, font=\footnotesize]
at (axis cs:0.1005,5e-8) {$4$};

\addplot [color=black, line width=1.5pt]
  table[row sep=crcr]{%
0.100   1.000e-07\\
0.075   3.164e-08\\
0.100   3.164e-08\\
0.100   1.000e-07\\
};

\end{axis}
\end{tikzpicture}
    \end{subfigure}%
    \begin{subfigure}[b]{0.5\textwidth}
            \resizebox{\textwidth}{!}{\definecolor{mycolor2}{rgb}{0.00000,1.00000,1.00000}%
\pgfplotsset{
  log x ticks with fixed point/.style={
      xticklabel={
        \pgfkeys{/pgf/fpu=true}
        \pgfmathparse{exp(\tick)}%
        \pgfmathprintnumber[fixed  zerofill, precision=2]{\pgfmathresult}
        \pgfkeys{/pgf/fpu=false}
      }
  }
}
\begin{tikzpicture}

\begin{axis}[%
width=3.875in,
height=2.36in,
at={(2.6in,1.099in)},
scale only axis,
xmode=log,
xmin=0.064,
xmax=0.3239,
xminorticks=true,
xlabel = {$h$ [-]},
ylabel = {$||q(T)-q_h(T)||_\varepsilon$},
ymode=log,
ymin=5e-5,
ymax=0.4,
yminorticks=true,
axis background/.style={fill=white},
title style={font=\bfseries},
xmajorgrids,
xminorgrids,
ymajorgrids,
yminorgrids,
legend style={legend cell align=left, align=left, draw=white!15!black}
]
              
\addplot [color=red, line width=2.0pt]
  table[row sep=crcr]{%
    0.322736007494350  0.320306290166234\\
    0.181290869729279  0.170072710281147\\
    0.102590411249172  0.065370648122349\\
    0.056694075094424  0.028541591009509\\
};

\addplot [color=orange, line width=2.0pt]
  table[row sep=crcr]{%
    0.322736007494350   0.238010552865124\\
    0.181290869729279	0.087174009937669\\
    0.102590411249172	0.033386623337429\\
    0.056694075094424	0.010242943501941\\
};

\addplot [color=green, line width=2.0pt]
  table[row sep=crcr]{%
    0.322736007494350	0.155294225770468\\
    0.181290869729279   0.027393322628307\\
    0.102590411249172	0.004527514592785\\
    0.056694075094424   7.068696699830621e-04\\
};

\addplot [color=mycolor2, line width=2.0pt]
  table[row sep=crcr]{%
    0.322736007494350   0.043224481278812\\
    0.181290869729279	0.003961584122848\\
    0.102590411249172	0.000474213701520\\
    0.056694075094424	0.000042768236460\\
};

\node[right, align=left, text=black, font=\footnotesize]
at (axis cs:0.1005,0.044) {$1$};

\addplot [color=black, line width=1.5pt]
  table[row sep=crcr]{%
0.100   0.0500\\
0.075   0.0375\\
0.100   0.0375\\
0.100   0.0500\\
};

\node[right, align=left, text=black, font=\footnotesize]
at (axis cs:0.1005,0.015) {$2$};

\addplot [color=black, line width=1.5pt]
  table[row sep=crcr]{%
0.100   0.02\\
0.075   0.01125\\
0.100   0.01125\\
0.100   0.02\\
};

\node[right, align=left, text=black, font=\footnotesize]
at (axis cs:0.1005,2e-3) {$3$};

\addplot [color=black, line width=1.5pt]
  table[row sep=crcr]{%
0.100   3e-03\\
0.075   1.265e-03\\
0.100   1.265e-03\\
0.100   3e-03\\
};

\node[right, align=left, text=black, font=\footnotesize]
at (axis cs:0.1005,1.5e-4) {$4$};

\addplot [color=black, line width=1.5pt]
  table[row sep=crcr]{%
0.100   3e-04\\
0.075   9.492e-05\\
0.100   9.492e-05\\
0.100   3e-04\\
};

\end{axis}
\end{tikzpicture}
    \end{subfigure}%
    \caption{Test case 1: computed errors and computed convergence rates versus $h$ for different choices of the polynomial degree $p=1,2,3,4$ and for the solutions $c_h$ (left) and $q_h$ (right).}
    \label{fig:test1_errorsh}
\end{figure}
\begin{figure}[t!]
    \begin{subfigure}[b]{0.5\textwidth}
            \resizebox{\textwidth}{!}{\definecolor{mycolor1}{rgb}{1.00000,1.00000,0.00000}%
\definecolor{mycolor2}{rgb}{0.00000,1.00000,1.00000}%

\begin{tikzpicture}
\begin{axis}[%
width=3.875in,
height=2.30in,
at={(2.6in,1.099in)},
scale only axis,
xmin=1,
xmax=8,
xlabel style={font=\color{white!15!black}},
xlabel={$p$},
ymode=log,
ymin=1e-10,
ymax=0.50,
yminorticks=true,
ylabel style={font=\color{white!15!black}},
ylabel={Error},
axis background/.style={fill=white},
title style={font=\bfseries},
title={Convergence with respect to the polynomial degree $p$},
xmajorgrids,
xminorgrids,
ymajorgrids,
yminorgrids,
legend style={legend cell align=left, align=left, draw=white!15!black}
]

\addplot [color=purple, line width=2.0pt]
  table[row sep=crcr]{%
    1   0.010133540049389 \\
    2   0.004146592101946 \\
    3   2.21200501177e-04 \\
    4   1.61277166711e-05 \\
    5   5.76257883078e-07 \\
    6   5.58204982895e-08 \\
    7   8.11848872878e-09 \\
    8   6.19926811441e-10 \\
};
\addlegendentry{$||c(T)-c_h(T)||_\varepsilon$}

\addplot [color=purple, dashed, line width=2.0pt]
  table[row sep=crcr]{%
    1   0.321520388355109 \\
    2   0.238846692882545 \\
    3   0.155914729738798 \\
    4   0.043409052578839 \\
    5   0.013587830880721 \\
    6   0.002264909297743 \\
    7   4.88979267667e-04 \\
    8   5.61023298328e-05 \\
};
\addlegendentry{$||q(T)-q_h(T)||_\varepsilon$}

\end{axis}
\end{tikzpicture}
            \caption{Test case 1: computed errors with respect to $p$.}
            \label{fig:test1_errorsp}
    \end{subfigure}%
    \begin{subfigure}[b]{0.5\textwidth}
            \resizebox{\textwidth}{!}{\definecolor{EI}{rgb}{0.00000,0.50000,1.00000}%
\definecolor{CN}{rgb}{1.00000,0.00000,0.50000}%
\begin{tikzpicture}

\begin{axis}[%
width=3.875in,
height=2.36in,
at={(2.6in,1.099in)},
scale only axis,
xmode=log,
xmin=0.025,
xmax=0.2000,
xminorticks=true,
xlabel = {$\Delta t$ [-]},
ylabel = {$||\cdot||_{\epsilon}$},
ymode=log,
ymin=1e-5,
ymax=0.05,
yminorticks=true,
axis background/.style={fill=white},
title style={font=\bfseries},
title={Convergence with respect to the timestep $\Delta t$},
xmajorgrids,
xminorgrids,
ymajorgrids,
yminorgrids,
legend style={at={(0.52,0.21)},anchor=west,legend cell align=left, align=left, draw=white!15!black}
]
                
\addplot [color=CN, line width=2.0pt]
  table[row sep=crcr]{%
0.2000  0.003840648783117\\
0.100  7.077096081645068e-04\\
0.050  1.572935067027902e-04\\
0.025  3.804466860706695e-05\\
};
\addlegendentry{\textbf{CN:} $||c(T)-c_h(T)||_\varepsilon$}

\addplot [color=CN, dashed,line width=2.0pt]
  table[row sep=crcr]{%
0.2000  0.004889584476105\\
0.100  9.887466196786465e-04\\
0.050  2.289501896475613e-04\\
0.025  5.591710693127719e-05\\
};
\addlegendentry{\textbf{CN:} $||q(T)-q_h(T)||_\varepsilon$}
 
\addplot [color=EI, line width=2.0pt]
  table[row sep=crcr]{%
0.2000  0.026259597346244\\
0.100  0.013500751325132\\
0.050  0.006847585818458\\
0.025  0.003448897172470\\
};
\addlegendentry{\textbf{BE:} $||c(T)-c_h(T)||_\varepsilon$}

    \addplot [color=EI, dashed, line width=2.0pt]
  table[row sep=crcr]{%
0.2000 0.035618061402185 \\
0.100  0.018277916696053\\
0.050  0.009262610333967\\
0.025  0.004666654955709\\
};
\addlegendentry{\textbf{BE:} $||q(T)-q_h(T)||_\varepsilon$}

\node[right, align=left, text=black, font=\footnotesize]
at (axis cs:0.04,3e-3) {$1$};

\addplot [color=black, line width=1.5pt]
  table[row sep=crcr]{%
0.039   4.5e-3\\
0.03   3e-3 \\
0.039   3e-3\\
0.039   4.50e-3\\
};

\node[right, align=left, text=black, font=\footnotesize]
at (axis cs:0.04,4e-5) {$2$};

\addplot [color=black, line width=1.5pt]
  table[row sep=crcr]{%
0.039   7.0e-5\\
0.030   4e-5\\
0.039   4e-5\\
0.039   7.0e-5\\
};
\end{axis}
\end{tikzpicture}}
                \caption{Test case 1: computed errors with respect to $\Delta t$.}
            \label{fig:test1_errordt}
    \end{subfigure}%
    \caption{Test case 1: computed errors and convergence rates with respect to $p$ (left) and $\Delta t$ (right). The results on the left panel have been obtained with the Crank-Nicolson (CN) time integration scheme; the results on the right panel have been obtained with both Backward Euler (BE) and CN schemes.}
\end{figure}
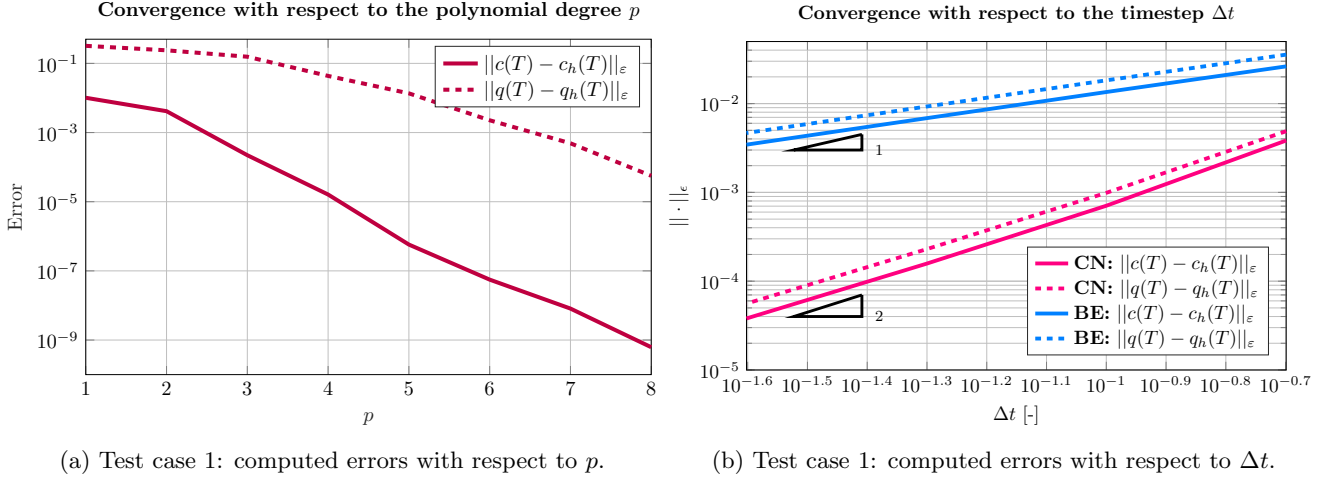
We establish a predefined exact solution as:
\begin{equation*}
     c(x,y,t) = (\cos(\pi x)+\cos(\pi y))\cos(t), \qquad
     q(x,y,t) = (\cos(4\pi x)\cos(4\pi y)+2)e^{-t}.
\end{equation*}
The forcing terms and the Dirichlet boundary conditions are chosen accordingly. In Figure~\ref{fig:test1_errorsh} we report the numerical results of the convergence test performed with respect to the mesh size parameter $h$. We use a time discretization step $\Delta t = 10^{-5}$ and fix a final time instant $ T= 10^{-3}$. We collect the energy norm of the error as defined in Equation~\eqref{eq:energynorm} for each solution and repeat the test keeping fixed the polynomial degree of approximation $p = 1,2,3,4$, while progressively refining the mesh ($N_{el}=30,100,300,1000$, where $N_{el}$ denotes the number of polygons of the domain partition). We can observe that the optimal rate of convergence is always achieved as expected from theory.
\par
In Figure~\ref{fig:test1_errorsp} we report the energy norm of the error with respect the polynomial degree $p$. We repeat the numerical tests keeping fixed the polygonal mesh characterized by $N_{el}=30$ and the time discretization $\Delta t=10^{-5}$ while considering an increasing polynomial order $p=1,...,8$. Even if the dependency of the error from $p$ has not been covered by the theoretical analysis we numerically demonstrated that exponential convergence is obtained.
\par
Finally, a convergence test with respect to the time discretization step $\Delta t$ has been reported in Figure~\ref{fig:test1_errordt}. The test set the most refined polygonal mesh with $N_{el}= 1000$ and $p=6$, and we registered the error in energy norm obtained for different time steps $\Delta t = 0.2, 0.1, 0.05, 0.025 $. We can observe that the Backward Euler method (characterized by setting $\theta=1$) exhibits theoretical linear convergence with respect to $\Delta t$, whereas the error decays with a second-order rate for Crank-Nicolson method ($\theta=0.5$).

\subsection{Test 2: simulation of a travelling wave}
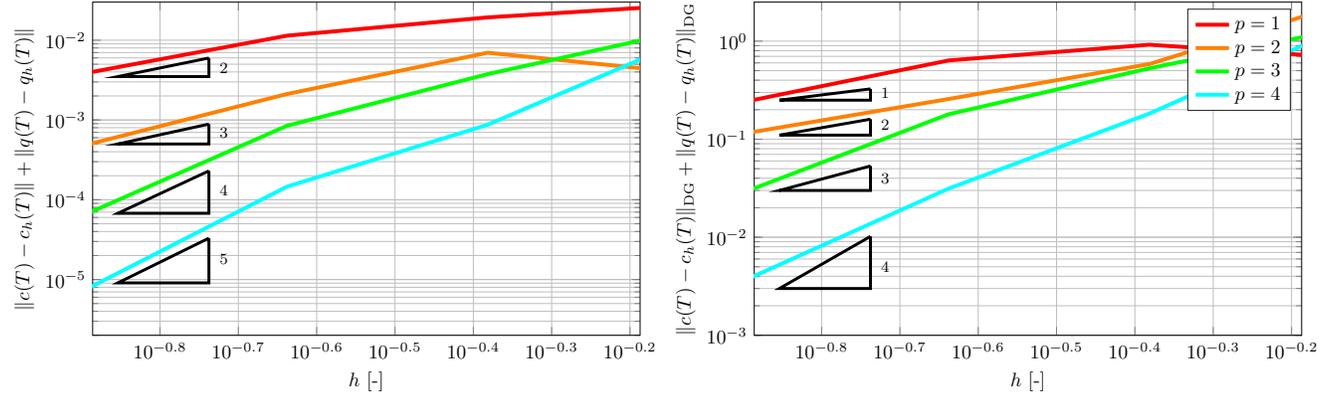
\begin{figure}[t!]
    \begin{subfigure}[b]{0.5\textwidth}
            \resizebox{\textwidth}{!}{\definecolor{mycolor2}{rgb}{0.00000,1.00000,1.00000}%
\pgfplotsset{
  log x ticks with fixed point/.style={
      xticklabel={
        \pgfkeys{/pgf/fpu=true}
        \pgfmathparse{exp(\tick)}%
        \pgfmathprintnumber[fixed  zerofill, precision=2]{\pgfmathresult}
        \pgfkeys{/pgf/fpu=false}
      }
  }
}
\begin{tikzpicture}

\begin{axis}[%
width=3.875in,
height=2.36in,
at={(2.6in,1.099in)},
scale only axis,
xmode=log,
xmin=0.13,
xmax=0.65,
xminorticks=true,
xlabel = {$h$ [-]},
ylabel = {$\|c(T)-c_h(T)\|+\|q(T)-q_h(T)\|$},
ymode=log,
ymin=2e-6,
ymax=0.030,
yminorticks=true,
axis background/.style={fill=white},
title style={font=\bfseries},
xmajorgrids,
xminorgrids,
ymajorgrids,
yminorgrids,
legend style={legend cell align=left, align=left, draw=white!15!black}
]
              
\addplot [color=red, line width=2.0pt]
  table[row sep=crcr]{%
    0.735612457097084  0.027365807350492 \\
    0.415320484644076  0.019315050542246 \\
    0.230266920810526  0.011397411488666 \\
    0.128860161084044  0.003946793721678 \\
};

\addplot [color=orange, line width=2.0pt]
  table[row sep=crcr]{%
    0.735612457097084   0.003946793721678 \\
    0.415320484644076	0.006963905528060 \\
    0.230266920810526	0.002118911721644 \\
    0.128860161084044   4.99136813488e-04 \\
};

\addplot [color=green, line width=2.0pt] 
  table[row sep=crcr]{%
    0.735612457097084	0.012915652189790 \\
    0.415320484644076   0.003758747129800 \\
    0.230266920810526	8.47881425602e-04 \\
    0.128860161084044   6.91446446962e-05 \\
};

\addplot [color=mycolor2, line width=2.0pt]
  table[row sep=crcr]{%
    0.735612457097084   0.009594308601130 \\
    0.415320484644076	8.72477374670e-04 \\
    0.230266920810526	1.46220273867e-04 \\
    0.128860161084044	7.93927403266e-06 \\
};

\node[right, align=left, text=black, font=\footnotesize]
at (axis cs:0.185,4.5e-3) {$2$};

\addplot [color=black, line width=1.5pt]
  table[row sep=crcr]{%
    0.183   5.98e-3\\
    0.140   3.50e-3\\
    0.183   3.50e-3\\
    0.183   5.98e-3\\
};

\node[right, align=left, text=black, font=\footnotesize]
at (axis cs:0.185,7e-4) {$3$};

\addplot [color=black, line width=1.5pt]
  table[row sep=crcr]{%
    0.183   8.88e-4\\
    0.140   5.00e-4\\
    0.183   5.00e-4\\
    0.183   8.88e-4\\
};

\node[right, align=left, text=black, font=\footnotesize]
at (axis cs:0.185,1.3e-4) {$4$};

\addplot [color=black, line width=1.5pt]
  table[row sep=crcr]{%
    0.183   2.30e-4\\
    0.140   6.76e-5\\
    0.183   6.76e-5\\
    0.183   2.30e-4\\
};

\node[right, align=left, text=black, font=\footnotesize]
at (axis cs:0.185,1.8e-5) {$5$};

\addplot [color=black, line width=1.5pt]
  table[row sep=crcr]{%
    0.183   3.300e-5\\
    0.140   9.075e-6\\
    0.183   9.075e-6\\
    0.183   3.300e-5\\
};

\end{axis}
\end{tikzpicture}
    \end{subfigure}%
    \begin{subfigure}[b]{0.5\textwidth}
            \resizebox{\textwidth}{!}{\definecolor{mycolor2}{rgb}{0.00000,1.00000,1.00000}%
\pgfplotsset{
  log x ticks with fixed point/.style={
      xticklabel={
        \pgfkeys{/pgf/fpu=true}
        \pgfmathparse{exp(\tick)}%
        \pgfmathprintnumber[fixed  zerofill, precision=2]{\pgfmathresult}
        \pgfkeys{/pgf/fpu=false}
      }
  }
}
\begin{tikzpicture}

\begin{axis}[%
width=3.875in,
height=2.36in,
at={(2.6in,1.099in)},
scale only axis,
xmode=log,
xmin=0.13,
xmax=0.65,
xminorticks=true,
xlabel = {$h$ [-]},
ylabel = {$\|c(T)-c_h(T)\|_\mathrm{DG}+\|q(T)-q_h(T)\|_\mathrm{DG}$},
ymode=log,
ymin=1e-3,
ymax=2.5,
yminorticks=true,
axis background/.style={fill=white},
title style={font=\bfseries},
xmajorgrids,
xminorgrids,
ymajorgrids,
yminorgrids,
legend style={legend cell align=left, align=left, draw=white!15!black}
]
              
\addplot [color=red, line width=2.0pt]
  table[row sep=crcr]{%
    0.735612457097084  0.681846402049076 \\
    0.415320484644076  0.919658934954176 \\
    0.230266920810526  0.635897816859510 \\
    0.128860161084044  0.248485941004050 \\
};
\addlegendentry{$p=1$}

\addplot [color=orange, line width=2.0pt]
  table[row sep=crcr]{%
    0.735612457097084   2.398892910347856 \\
    0.415320484644076	0.583678193872960 \\
    0.230266920810526	0.257119381397100 \\
    0.128860161084044   0.117579355827332 \\
};
\addlegendentry{$p=2$}

\addplot [color=green, line width=2.0pt] 
  table[row sep=crcr]{%
    0.735612457097084	1.340036044777102\\
    0.415320484644076   0.528128594138934\\
    0.230266920810526	0.179717340471348\\
    0.128860161084044   0.030696536821256\\
};
\addlegendentry{$p=3$}

\addplot [color=mycolor2, line width=2.0pt]
  table[row sep=crcr]{%
    0.735612457097084   1.385785299981486\\
    0.415320484644076	0.182379188357632\\
    0.230266920810526	0.031364825624356\\
    0.128860161084044	0.003894161405588\\
};
\addlegendentry{$p=4$}

\node[right, align=left, text=black, font=\footnotesize]
at (axis cs:0.185,0.30) {$1$};

\addplot [color=black, line width=1.5pt]
  table[row sep=crcr]{%
    0.183   0.326\\
    0.140   0.250\\
    0.183   0.250\\
    0.183   0.326\\
};

\node[right, align=left, text=black, font=\footnotesize]
at (axis cs:0.185,0.14) {$2$};

\addplot [color=black, line width=1.5pt]
  table[row sep=crcr]{%
    0.183   0.160\\
    0.140   0.110\\
    0.183   0.110\\
    0.183   0.160\\
};

\node[right, align=left, text=black, font=\footnotesize]
at (axis cs:0.185,4e-2) {$3$};

\addplot [color=black, line width=1.5pt]
  table[row sep=crcr]{%
    0.183   5.33e-2\\
    0.140   3.00e-2\\
    0.183   3.00e-2\\
    0.183   5.33e-2\\
};

\node[right, align=left, text=black, font=\footnotesize]
at (axis cs:0.185,5e-3) {$4$};

\addplot [color=black, line width=1.5pt]
  table[row sep=crcr]{%
    0.183   1.02e-2\\
    0.140   3.00e-3\\
    0.183   3.00e-3\\
    0.183   1.02e-2\\
};

\end{axis}
\end{tikzpicture}
    \end{subfigure}%
    \caption{Test case 2: computed errors at final time $T=10^{-3}$ and computed convergence rates versus $h$ for different choices of the polynomial degree $p=1,2,3,4$ and for the solutions $c_h$ (left) and $q_h$ (right).}
    \label{fig:test2_errors}
\end{figure}
\begin{table}[t!]

    \centering
    \begin{tabular}{c|c|C|C|c|C|C}
    \hline
    \textbf{Method}
    & \multicolumn{3}{c|}{\textbf{$p=1$}} 
    & \multicolumn{3}{c}{\textbf{$p=2$}}
    \\    \hline
    \textbf{$h$} & \textbf{DOFs}
    & $T_1 = 5$    & $T_2 = 10$  & \textbf{DOFs}
    & $T_1 = 5$    & $T_2 = 10$
    \\ \hline
    $0.736$ & $90$
    & $4.62 \times 10^{-2}$ & $4.62 \times 10^{-2}$ & $180$
    & $2.74 \times 10^{-2}$ & $2.67 \times 10^{-2}$
    \\ \hline
    $0.415$ & $300$
    & $2.83 \times 10^{-2}$ & $2.74 \times 10^{-2}$ & $600$
    & $8.79 \times 10^{-3}$ & $9.38 \times 10^{-3}$
    \\ \hline    
    $0.230$ & $900$
    & $1.55 \times 10^{-2}$ & $1.76 \times 10^{-2}$ & $1800$
    & $2.33 \times 10^{-3}$ & $2.78 \times 10^{-3}$
    \\ \hline
    $0.129$ & $3000$
    & $6.24 \times 10^{-3}$ & $5.65 \times 10^{-3}$ & $6000$
    & $4.63 \times 10^{-4}$ & $4.39 \times 10^{-4}$
    \\ \hline
    \end{tabular}
    \\[10pt]
    \begin{tabular}{c|c|C|C|c|C|C}
    \hline
    \textbf{Method}
    & \multicolumn{3}{c|}{\textbf{$p=3$}} 
    & \multicolumn{3}{c}{\textbf{$p=4$}}
    \\    \hline
    \textbf{$h$} & \textbf{DOFs}
    & $T_1 = 5$    & $T_2 = 10$  & \textbf{DOFs}
    & $T_1 = 5$    & $T_2 = 10$
    \\ \hline
    $0.736$ & $300$
    & $8.09 \times 10^{-3}$ & $9.07 \times 10^{-3}$ & $450$
    & $8.19 \times 10^{-3}$ & $7.78 \times 10^{-3}$
    \\ \hline
    $0.415$ & $1000$
    & $3.22 \times 10^{-3}$ & $2.77 \times 10^{-3}$ & $1500$
    & $1.21 \times 10^{-3}$ & $1.19 \times 10^{-3}$
    \\ \hline    
    $0.230$ & $3000$
    & $5.29 \times 10^{-4}$ & $5.31 \times 10^{-4}$ & $4500$
    & $1.29 \times 10^{-4}$ & $2.05 \times 10^{-4}$
    \\ \hline
    $0.129$ & $10000$
    & $4.99 \times 10^{-5}$ & $4.77 \times 10^{-5}$ & $15000$
    & $5.71 \times 10^{-6}$ & $5.29 \times 10^{-6}$
    \\ \hline
    \end{tabular}
    \caption{Computed errors with $L^2$-norm of the error of $\|q(T)-q_h(T)\|$ at different final time instants (errors for $c_h$ are analogous).}
    \label{table:Test2}
\end{table}
The objective of this test is to evaluate the method's ability to accurately replicate the dynamics of a travelling-wave solution as it drives the system towards the unstable equilibrium state. To accomplish this we consider a rectangular domain $\Omega=(0,5) \times (0,1)$ partitioned via Polymesher \cite{Polymesher}. We impose as the exact solution a couple of travelling-wave functions characterized by a velocity of propagation $\bar{v}=0.1$:
\begin{align*}
       c(x,y,t) = \frac{\arctan( 3 \pi ( x-\bar{v} t - 1))}{\pi} +\frac{1}{2}, \qquad
       q(x,y,t) = \frac{\arctan(-3 \pi ( x-\bar{v} t - 1))}{\pi} +\frac{1}{2},
\end{align*}
and we derive the forcing terms and Dirichlet conditions accordingly. We adopt the parameter values shown in Table~\ref{tab:test1_param}.
\par
A convergence analysis has been performed as in the previous section, in particular, we collect in Figure~\ref{fig:test2_errors} the $L^2$-norm and the DG-norm of the error with respect to the mesh size parameter $h$. As expected we can observe that the DG-norm of the error decreases as $h^p$, while the $L^2$-norm decreases as $h^{p+1}$. 
\par
\begin{figure}[t!]
	\centering
	\begin{subfigure}[b]{\textwidth}
        {\includegraphics[width=\textwidth]{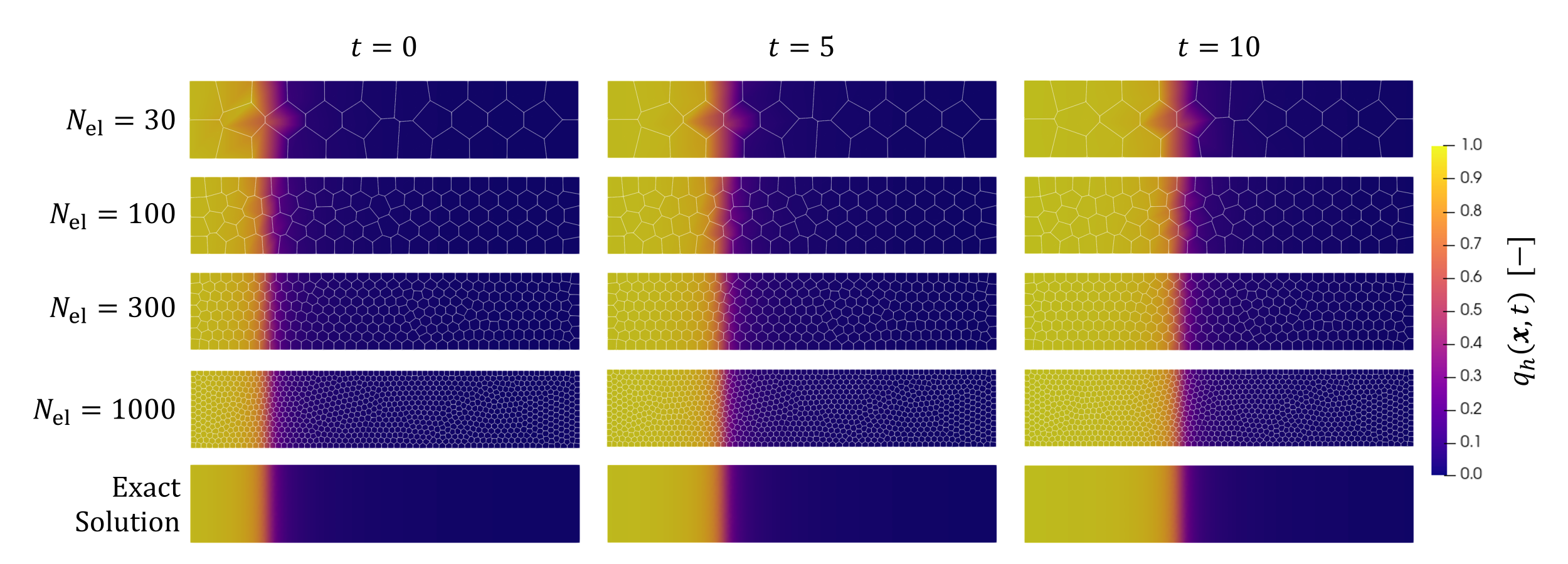}}
        \caption{Wavefront propagation of $q$.}
    \end{subfigure}%
    \\
	\begin{subfigure}[b]{\textwidth}
        {\includegraphics[width=\textwidth]{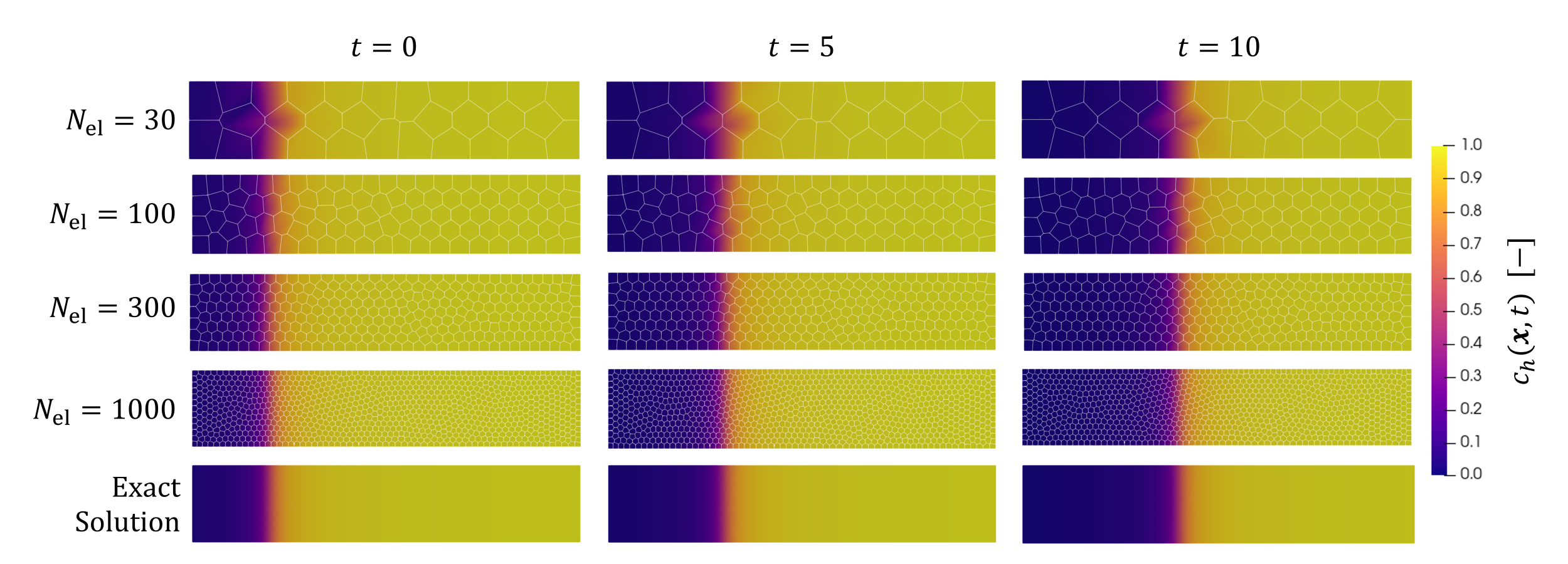}}
        \caption{Wavefront propagation of $c$.}
    \end{subfigure}%
    \caption{Test case 2: wavefront propagation approximated with $p = 2$.}
	\label{fig:wavefront}
\end{figure}  
We repeat the test for two possible final time instants $T_1=5$ and $T_2=10$. These results aimed at assessing the impact of mesh refinement on the accuracy of the solution where the error is measured in the $L^2$-norm for various polynomial orders, specifically $p=1,2,3,4$. The results of our investigations are reported in Table~\ref{table:Test2}. Notably, the $L^2$-norms of errors at different final time instants are comparable, which supports the fact that the computed solution is stable and correctly approximated. Moreover, with this test, we can highlight the capability of the higher-order methods to accurately replicate the wave behaviour of the solution even with coarser mesh configurations \cite{antonietti_mathematical_2022}. To illustrate this point, consider the error obtained with $N_{el}=1000$ and $p=1$ compared to that with $N_{el}=300$ and $p=3$. Even if both setups have the same number of degrees of freedom (3000), the solution approximated with $p=3$ exhibited an error of $5.29 \times 10^{-4}$, which is approximately one order of magnitude smaller than the error in the case $p=1$ ($6.24 \times 10^{-3}$). This discrepancy becomes even more evident when comparing the accuracy of the solution achieved with $p=4$ and $N_{el}=100$ to that obtained with $p=2$ and $N_{el}=300$. Despite having fewer degrees of freedom, the solution computed with $p=4$ on a coarser mesh is more accurate than the one obtained with $p=2$.  
\par
Finally, in Figure~\ref{fig:wavefront}, we visualize the evolution of the wavefront for the case of $p=2$ at the two chosen final times, for different levels of mesh refinement. We stress the fact that a sufficiently refined grid is essential to capture the rapid variations characterizing the wave. However, even with $p=2$, we can confidently conclude that the method produces a numerical solution that accurately reproduces the wave, together with its propagation velocity.
\subsection{Test case 3: diffusion processes on agglomerated brain meshes}
In this section, we consider a detailed mesh of the brain obtained by segmentation of a structural MRI dataset sourced from the OASIS-3 database \cite{OASIS3}, as described in \cite{corti_discontinuous_2023}. Subsequently, the initial fine triangulated structure, comprising $43 402$ triangles (of Figure~\ref{fig:BrainMeshTri}), is agglomerated into a polygonal mesh consisting of $534$ elements, of Figure~\ref{fig:BrainMeshAgg}. This transformation gives us the possibility to take full advantage of the versatility of the PolyDG method reducing computational overhead during the simulation.
\begin{figure}[t]
    \centering
    \begin{subfigure}[b]{0.4\textwidth}
        \includegraphics[width=\textwidth]{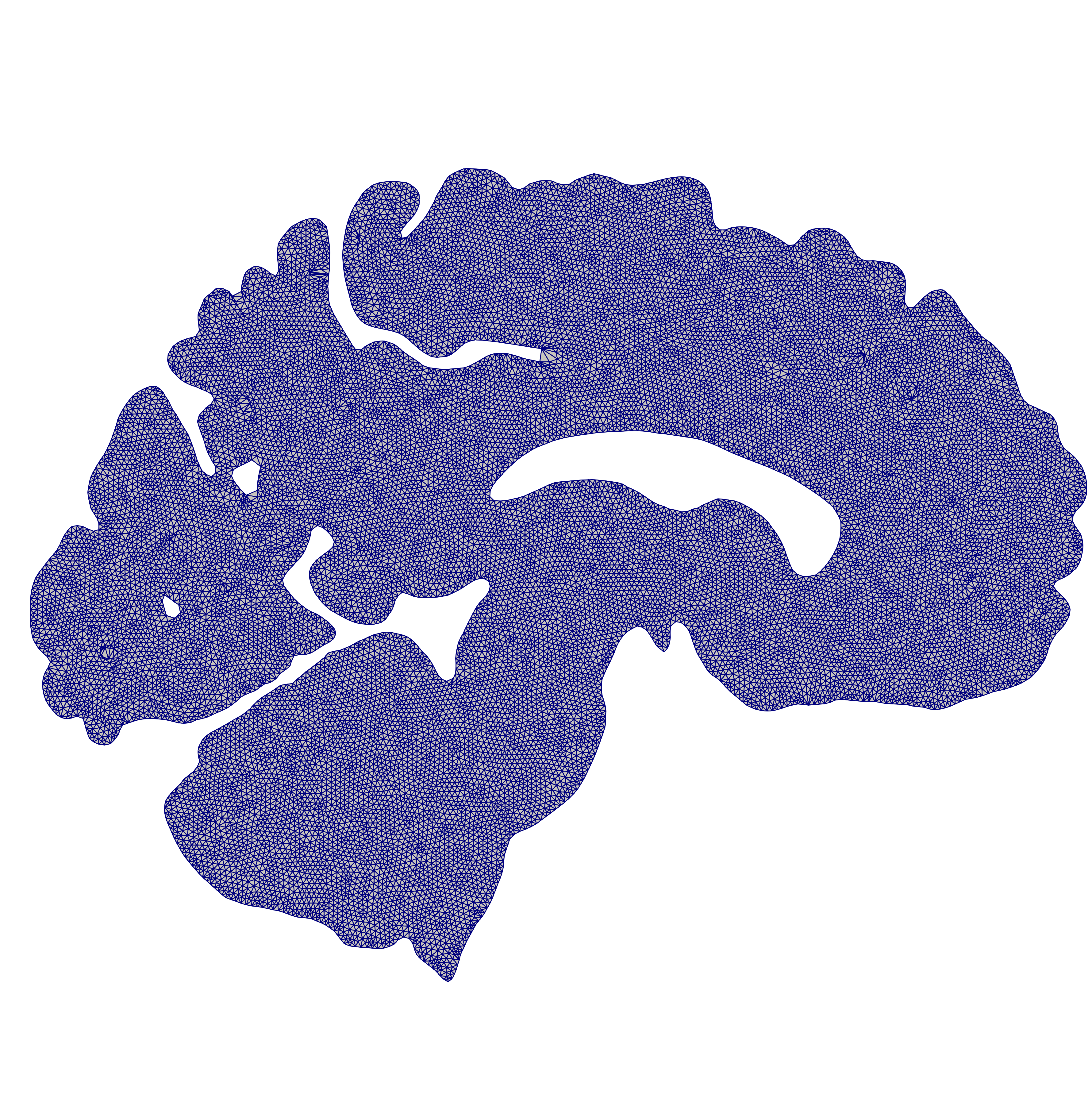}
        \caption{Triangular mesh}
        \label{fig:BrainMeshTri}
    \end{subfigure}
    \begin{subfigure}[b]{0.4\textwidth}
        \includegraphics[width=\textwidth]{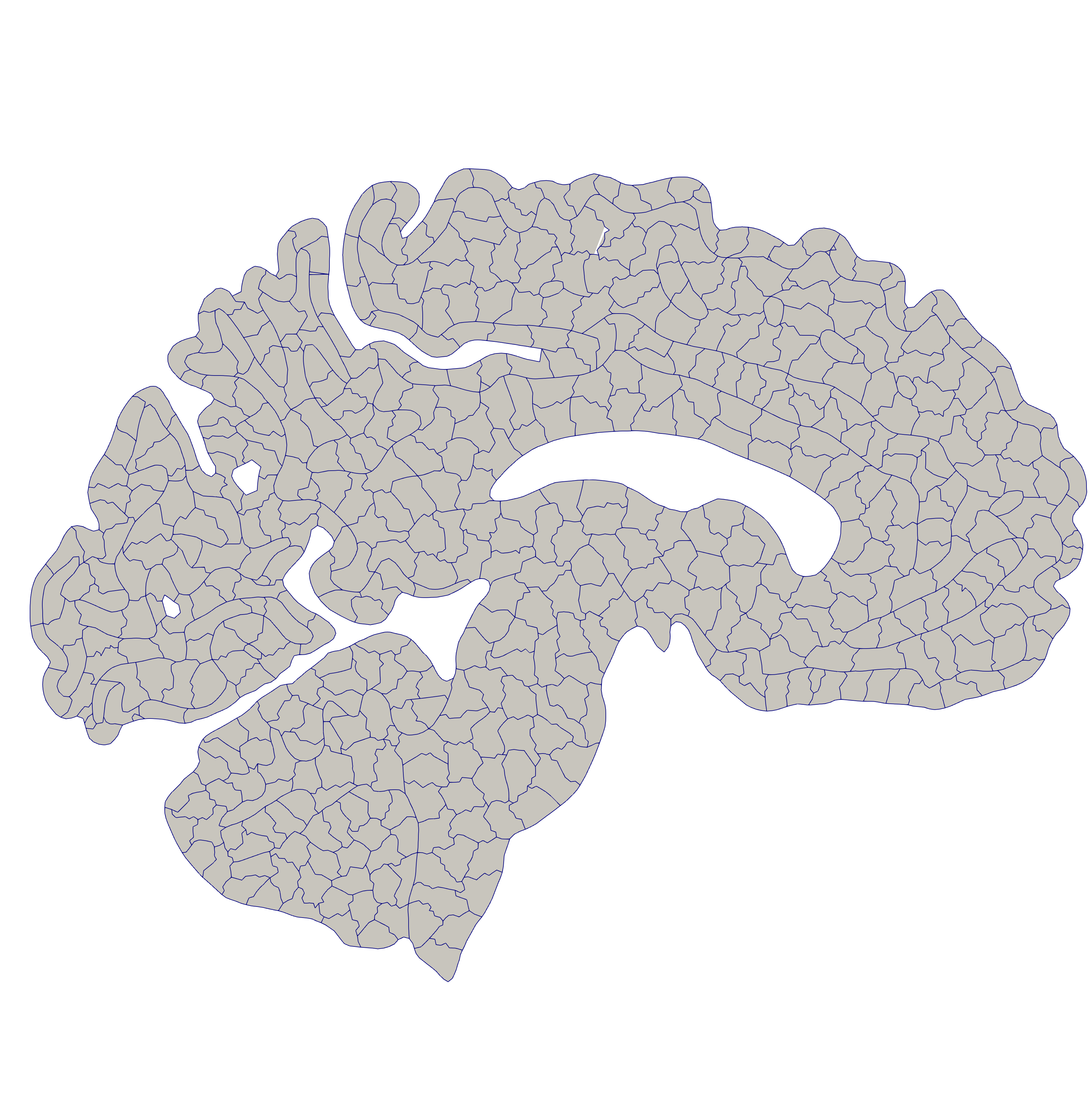}
        \caption{Agglomerated polygonal mesh}
        \label{fig:BrainMeshAgg}
    \end{subfigure}
   \caption{Initial (a) and agglomerated (b) meshes of a two-dimensional brain section reconstructed from MRI image.}
   \label{fig:BrainMesh}
\end{figure}

\begin{table}[t]
	\centering
	\begin{tabular}{|c|r l|c|r l|}
	\hline
	\textbf{Parameter} & \multicolumn{2}{c|}{\textbf{Value}} &	\textbf{Parameter} & \multicolumn{2}{c|}{\textbf{Value}} \\ 
		\hline 
		 $d_\mathrm{ext}$     &  $10^{-6}$    & $[\mathrm{mm^2/years}]$ 
		& $d_\mathrm{axn}$     &  $0.00$       & $[\mathrm{mm^2/years}]$ \\ 
		\hline 
		 $k_0$                 &  $0.75$       & $[\mathrm{1/years}]$ 
		& $k_{12}$              &  $1.00$       & $[\mathrm{1/years}]$ \\ 
		\hline 
		 $k_1$                 &  $1.00$       & $[\mathrm{1/years}]$ 
		& $\widetilde{k}_1$     &  $0.60$       & $[\mathrm{1/years}]$ \\ 
		\hline 
	\end{tabular}
	\caption{Physical parameter values used in Test case 3 \cite{thompsonProteinproteinInteractionsNeurodegenerative2020a}.}
	\label{tab:test3_param}
\end{table}

\par
The goal of this test is to test the capability of the method to correctly reproduce the isotropic diffusion phenomena on a polygonal agglomerated grid of a brain section. The initial conditions we impose are the following:
\begin{equation*}
    c_0(x,y) = 0.75 \qquad q_0(x,y) = 0.25 e^{-50(x-0.02)^2-50y^2}.
\end{equation*}
\begin{figure}[t]
    \centering
    \includegraphics[width=\textwidth]{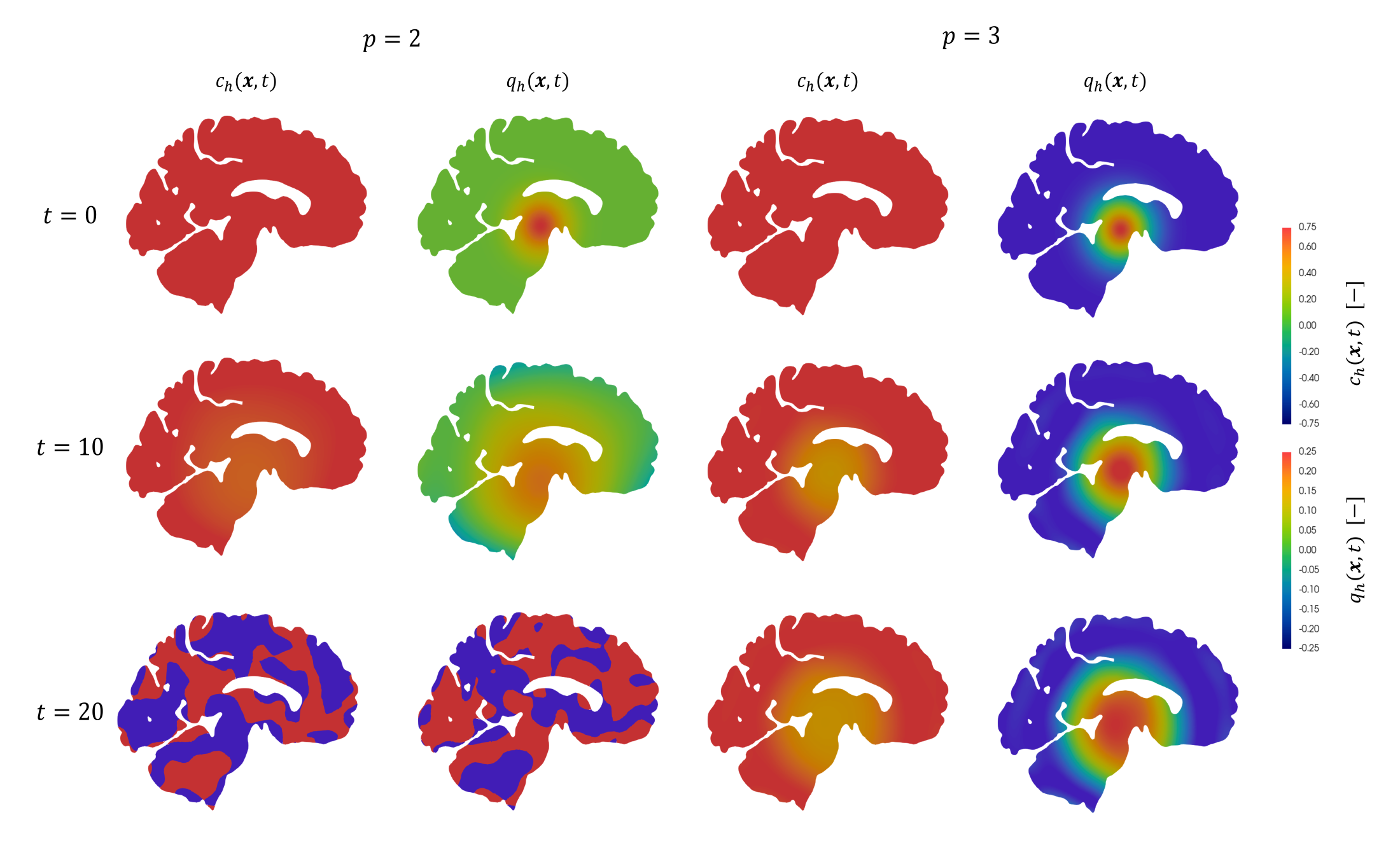}
   \caption{Test case 3: Numerical solution obtained by using $p=2$ (left) and $p = 3$ (right).}
   \label{fig:solution_brain_waves}
\end{figure}
The values of the model parameters are reported in Table~\ref{tab:test3_param}. Analyzing the model we expect to observe the system evolve towards the pathological stable equilibrium point presented in Section \ref{MathematicalModel}:
\begin{equation}
    \label{eq_Test3}
    (c_2,q_2)=\left(\frac{\tilde{k}_1}{k_{12}},\frac{k_0}{\tilde{k}_1}-\frac{k_1}{k_{12}}\right)= (0.6,0.25).
\end{equation}
Moreover, it is possible to compute analytically the minimum speed of the travelling wave solution transitioning the system from the initial solutions toward the pathological state: 
\begin{equation*}
\bar{v}= \displaystyle 2 \sqrt{d_{ext} \left(k_{12}\frac{k_0}{k_1}-\tilde{k}_1\right)}\simeq 6 \times 10^{-5}.
\end{equation*}
The problem is coupled with homogeneous Neumann boundary conditions.
\par
In Figure~\ref{fig:solution_brain_waves}, we report screenshots of the approximated solution starting from the initial solution up to the final time $T=20$, $\Delta t = 10^{-2}$ and $p=2, 3$. Considering the case $p=2$, we detect an unstable approximated solution, even when employing a reduced time step. Our observations reveal that the employed method lacks positive preservation characteristics. As a consequence, when the approximated solution manifests negative values, the computational scheme converges towards an oscillatory solution with no physical significance. This behavior underscores the need for improved positivity-preserving algorithms in order to maintain the integrity of the numerical solutions in the presence of negative values as in the case of FK model \cite{corti_positivity_preserving}. Nevertheless, stability can be reinstated through the computation of the approximation at a higher order, specifically with $p=3$, as illustrated in Figures~\ref{fig:solution_brain_waves}. In a reaction-dominated regime, higher-order methods can effectively capture the travelling wave solution, resulting in the recovery of a stable solution transitioning the system towards the stable equilibrium. This behaviour is coherent with the recent results in wave-front simulations studies \cite{antonietti_mathematical_2022}. This numerical test confirms the capacity of the proposed method of representing the wavefronts in complex geometries, like in brain sections, where the mesh are constructed as agglomeration of detailed triangular meshes, if using sufficiently high order polynomial degree in the approximation. This is a fundamental property to guarantee the quality of the solution in the medical applications.
\section{Simulation of $\alpha$-synuclein spreading in a two dimensional brain section}
\label{BrainApp}
\begin{table}[t]
	\centering
	\begin{tabular}{|c|r l|c|r l|}
	\hline
	\textbf{Parameter} & \multicolumn{2}{c|}{\textbf{Value}} &	\textbf{Parameter} & \multicolumn{2}{c|}{\textbf{Value}} \\ 
		\hline 
		 $d_\mathrm{ext}$     &  $8.00$        & $[\mathrm{mm^2/years}]$ 
		& $d_\mathrm{axn}$     &  $80.00$       & $[\mathrm{mm^2/years}]$ \\ 
		\hline 
		 $k_0$                 &  $0.60$       & $[\mathrm{1/years}]$ 
		& $k_{12}$              &  $1.00$       & $[\mathrm{1/years}]$ \\ 
		\hline 
		 $k_1$                 &  $0.50$       & $[\mathrm{1/years}]$ 
		& $\widetilde{k}_1$     &  $0.30$       & $[\mathrm{1/years}]$ \\ 
		\hline 
	\end{tabular}
	\caption{Physical parameter values used in Test 4 \cite{thompsonProteinproteinInteractionsNeurodegenerative2020a}.}
	\label{tab:test4_param}
\end{table}
\begin{figure}[t]
    \centering
    \includegraphics[width=\textwidth]{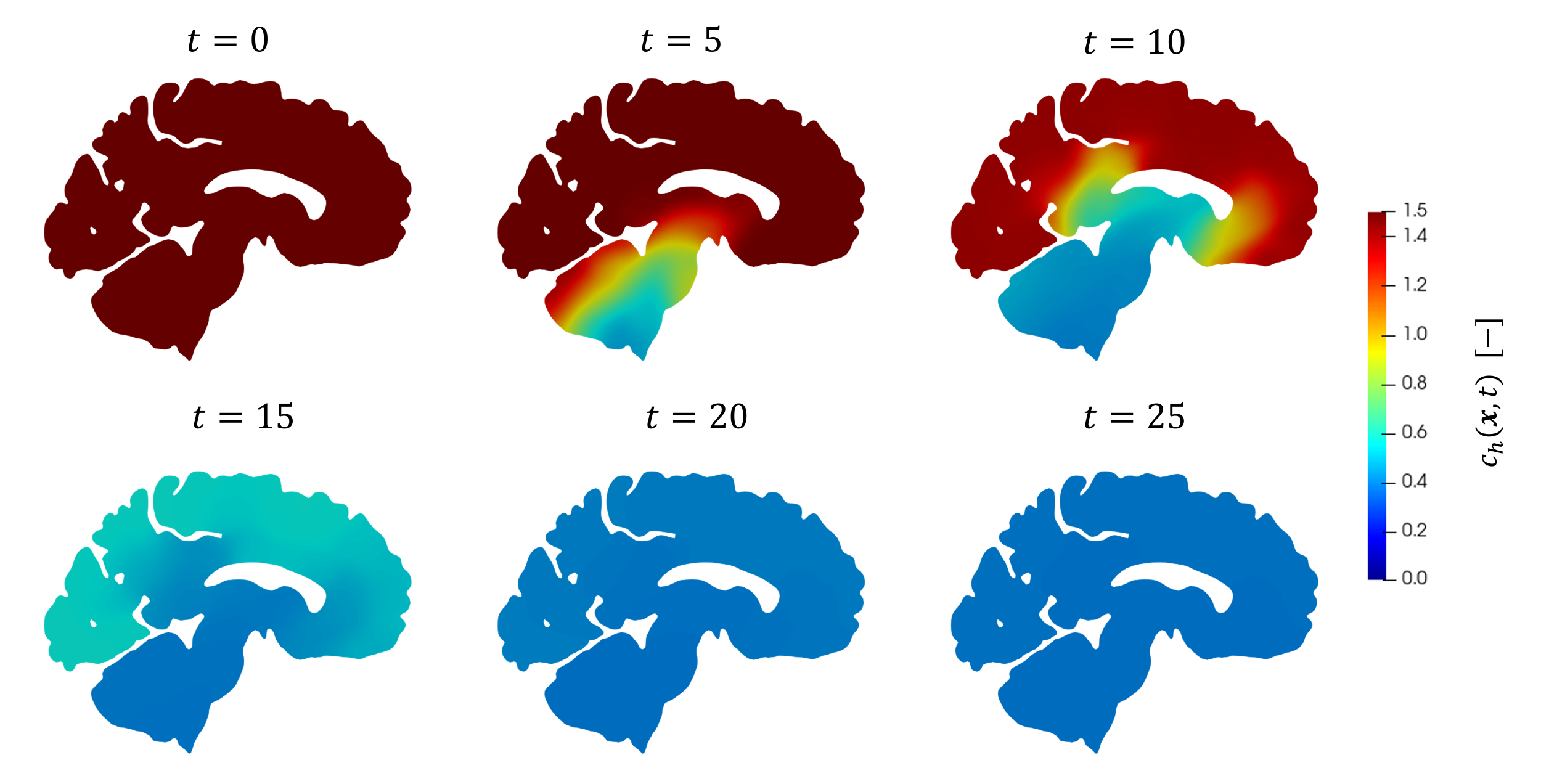}
   \caption{The initial configuration corresponds to the unstable healthy state, with a protein concentration $c=1.2$, uniformily distributed all over the brain. The presence of misfolded proteins trigger the system causing the decay of the concentration.}
   \label{fig:solution_brain_c}
\end{figure}
This Section is devoted to address the problem of simulating the spread of $\alpha$-synuclein protein into a two dimensional section of the human brain, associated to the Parkinson's disease \cite{spillantini_-synuclein_1997}. Coherently to the modelling of this phenomenon, we impose null flux across the boundary, setting $\partial \Omega = \Gamma_\mathrm{N}$. 
\par
In order to model the diffusion of $\alpha$-synuclein protein within the previously delineated brain region, we need to establish appropriate parameter configurations. Measuring such mean rate of production, destruction and conversion to provide best-fit coefficients to deterministic model constitutes a developing field in current research \cite{li_quantifying_2005,masel_quantifying_1999,iljina_kinetic_2016,perrino_quantitative_2019}.
\par
In \cite{corti_discontinuous_2023}, an exploration of the phenomenon under investigation in this section was conducted through the implementation of the FK model. This model, derived from the heterodimer equations, as elucidated in a prior study \cite{weickenmeierPhysicsbasedModelExplains2019}, offers a simplified representation, focusing on the temporal evolution of the relative concentration of misfolded protein only. The fundamental parameter of interest in this context is the reaction coefficient denoted as $\alpha$. This parameter holds a phenomenological significance and exhibits a direct connection with the coefficients of the heterodimer model, as established by the following relationship:  
\begin{equation}
    \label{eq:alpha}
    \alpha = k_{12}\frac{k_0}{k_1}- \tilde{k}_{1}.
\end{equation}
Specifically, a significant value for this parameter, one that facilitates the faithful replication of the spreading pattern and propagation velocity of $\alpha$-synuclein in the brain, may be established at $\alpha=0.9/\mathrm{year}$ \cite{weickenmeierPhysicsbasedModelExplains2019}. Consequently, an appropriate parameter configuration, one that adheres to the relationship delineated in Equation~\eqref{eq:alpha} and aligns with the considerations in Section \ref{MathematicalModel}. 
\par
\par
Another important aspect of the model is the construction of the anisotropic part of the diffusion tensor in Equation~\eqref{eq:DiffusionTensor}. To fulfill this purpose, the axonal directions $\bar{\boldsymbol{a}}(\boldsymbol{x})$ are derived from diffusion-weighted imaging (DWI) \cite{corti_discontinuous_2023}. The adopted values are reported in Table~\ref{tab:test4_param}. We trigger the system with an initial seeding of the misfolded version of $\alpha$-synuclein protein concentrated in the dorsal motor nucleus \cite{dickson_neuropathology_2018}, while we set an initial condition for the healthy protein concentration uniformly equal to the unstable equilibrium point $1.2$. We remark that, using this setting and considering the stability estimate in equation \eqref{StabilityEstimate_HomofeneousF}, we can obtain the following order of the final time $T$:
\begin{equation*}
    T = \left(\dfrac{\tilde{\mu}2^{d-2}\mu}{C_{\mathrm{G}_d}^3 K_{12}^2 k_0^{2(d-1)} |\Omega|^{2(d-1)}}\right)^{1/d} = O(10^8\,\mathrm{s}).
\end{equation*}
We fix the polynomial order $p=4$ in every element of the discretization, Crank-Nicolson method for the time discretization with time step $\Delta t = 0.01$ and simulate up to $T=25$ years. We have opted for a second-order numerical method in order to ensure the accurate approximation of the wavefront over an extended temporal simulation interval. 
\par
\begin{figure}[t]
    \centering
    \includegraphics[width=\textwidth]{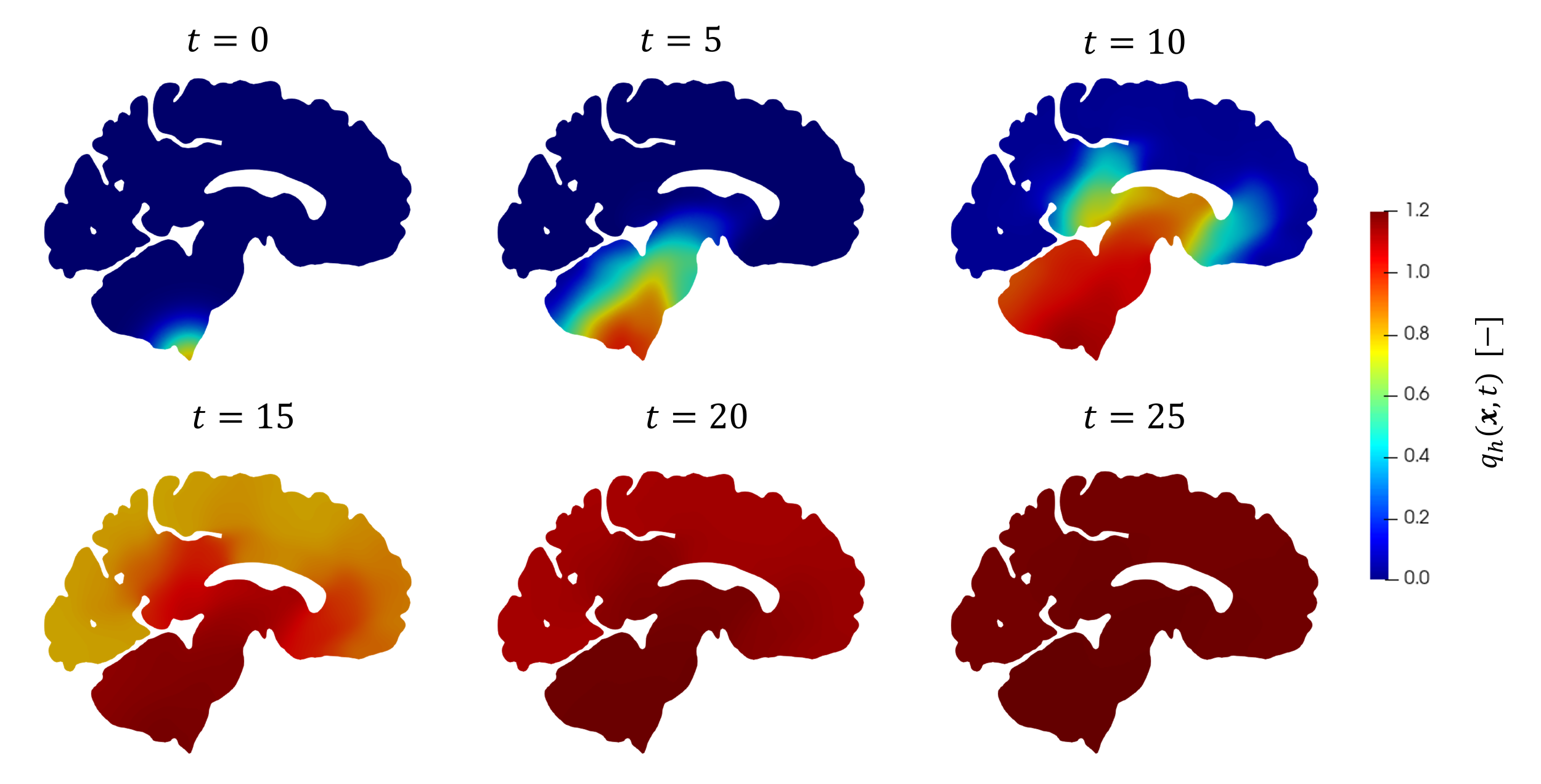}
   \caption{Misfolded protein $q_h$, initially concentrate into the dorsal motor nucleus, spreads through the entire brain section.}
   \label{fig:solution_brain_q}
\end{figure}
\par
In this framework we expect the system to develop from the initial state towards the stable equilibrium point $(c_h,q_h)=(0.3,1.5)$. In Figures \ref{fig:solution_brain_c} and \ref{fig:solution_brain_q}, we report the approximated solution at different time instants of the simulation. The solutions have effectively developed toward a stable pathological state. We compare the misfolded evolution in Figure \ref{fig:solution_brain_q} with the solution obtained in other works in literature \cite{corti_discontinuous_2023,corti_positivity_preserving}. The preceding investigation applied the FK equation and despite minor discrepancies arising from the distinct diffusion tensor employed, our analysis reveals a prevailing alignment in the propagation directions of the misfolded protein. Starting from the motor nucleus the misfolded protein spread towards the lower brain stem, subsequently ascending to involve the cerebral cortex in the advanced stages of the pathological process. This observed activation pattern within distinct brain regions aligns with the theoretical tenets elucidated in the Braak staging theory \cite{braakStagingBrainPathology2003a}.
\section{Conclusions}
\label{conclusion}
In this article, we have introduced a Polygonal Discontinuous Galerkin method as a computational approach for simulating the heterodimer model. Our work includes demonstrating the stability of the numerical solution and establishing an error estimate for the semi-discrete solution. We performed a convergence analysis with respect to mesh refinements and the order of approximation $p$. Finally, we have numerically verified the theoretical convergence rate with respect to time discretization methods. Then, we have accessed the ability of the method to accurately reproduce a travelling-wave solution, typical of this type model, unless not guaranteed for all the parametrical choices. Our findings have affirmed the method's ability to accurately reconstruct the propagation front of the solution, particularly when higher-order methods are employed, even on coarser grids. 
\par
In the end, we have conducted simulations considering the application of $\alpha$-synuclein spreading across the brain section, in Parkinson's disease. Our results have been validated by comparing them with simulations based on the Fisher-Kolmogorov equation and clinical data from the literature. In conclusion, the heterodimer model has demonstrated its utility as a valuable tool for advancing research in the mathematical modelling of neurodegenerative diseases.
\par
A logical progression of our research involves extending its application to a three-dimensional representation of the brain. Furthermore, an additional prospective direction involves conducting a comprehensive quantitative analysis of the influence exerted by all the parameters within the system on the solution. The heterodimer model can also find application in the study of other prion-like proteins, such as $\beta$-amyloid and tau proteins, which are pivotal in Alzheimer's disease. The extension of its applicability enriches our comprehension of neurodegenerative conditions and contributes to the progression toward more precise and less invasive patient-specific healthcare approaches.

\section*{Acknowledgments}
The brain MRI images were provided by OASIS-3: Longitudinal Multimodal Neuroimaging: Principal Investigators: T. Benzinger, D. Marcus, J. Morris; NIH P30 AG066444, P50 AG00561, P30 NS09857781, P01 AG026276, P01 AG003991, R01 AG043434, UL1 TR000448, R01 EB009352. AV-45 doses were provided by Avid Radiopharmaceuticals, a wholly-owned subsidiary of Eli Lilly.

\section*{Declaration of competing interests}
The authors declare that they have no known competing financial interests or personal relationships that could have appeared to influence the work reported in this article.

\bibliographystyle{hieeetr}
\bibliography{sample.bib}
\end{document}